\newcommand{\R}{\mathbb R}
\newcommand{\N}{\mathbb N}
\newcommand{\X}{\mathbb X}
\newcommand{\Uball}{{\mathbb B}}
\newcommand{\Usfer}{{\mathbb S}}
\newcommand{\dom}{{\rm dom}\, }
\newcommand{\nullv}{\mathbf{0}}
\newcommand{\bd}{{\rm bd}\, }
\newcommand{\inte}{{\rm int}\, }
\newcommand{\Argmin}{{\rm Argmin}}
\newcommand{\shar}{{\rm sha}}
\newcommand{\ball}[2]{{\rm B}(#1, #2)}
\newcommand{\stsl}[1]{|\nabla #1|}
\newcommand{\grsl}[1]{#1^\downarrow}
\newcommand{\Err}[1]{{\rm Er}#1}
\newcommand{\FrDer}{{\rm D}}
\newcommand{\HlDer}{{\rm D}^\downarrow_{\mathcal H}}
\newcommand{\Fam}{\Phi}
\newcommand{\Fsubd}{\partial_{\Fam}}
\newcommand{\Fsubdloc}{\partial^{\rm loc}_{\Fam}}
\newcommand{\Fsubdeps}[1]{\partial^{\rm loc}_{{\Fam}, #1}}
\newcommand{\regsubd}{\hat{\partial}}
\newcommand{\lev}[2]{[#1\le #2]}
\newcommand{\levsupp}[2]{[#1> #2]}
\newcommand{\dist}[2]{{\rm dist}\left(#1,#2\right)}
\newcommand{\Ptb}[3]{{\rm Ptb}\left(#1,#2,#3\right)}
\newcommand{\Liploc}[2]{{\rm lip}(#1,#2)}
\begin{document}

\title{On the variational behaviour of functions with positive
steepest descent rate}


\author{A. Uderzo}


\institute{A. Uderzo \at
                   Dept. of Mathematics and Applications \\
                   University of Milano-Bicocca \\
              Tel.: +39-02-64485871\\
              Fax: +39-02-64485705 \\
              \email{amos.uderzo@unimib.it} 
}

\date{\today}

\maketitle

\begin{abstract} 

This paper investigates some aspects of the variational behaviour
of nonsmooth functions, with special emphasis on certain stability
phenomena.
Relationships linking such properties as sharp minimality, superstability,
error bound and sufficiency of first-order optimality conditions are
discussed. Their study is performed by employing the steepest descent
rate, a rather general tool, which is adequate for a metric space
analysis.
The positivity of the steepest descent rate is then characterized in
terms of $\Fam$-subdifferentials. If specialized to a Banach space
setting, the resulting characterizations subsume known
results on the stability of error bounds.

\keywords{Steepest descent rate \and Strong slope \and Sharp minimizer
\and Nondifferentiable optimization \and Superstable solution \and Error bound
\and Optimality condition \and $\Fam$-subdifferential}
\subclass{49J52 \and 49J53\and 90C25\and 90C31}
\end{abstract}

\section{Introduction}
\label{intro}

It is well known that classical differential calculus provides a
powerful apparatus for a refined analysis of optimization problems.
On the other hand, successful approaches to the differentiability
of functions have revealed intriguing connections between minimality and
smoothness, with the result that the latter can be established through
variational principles. The author of the present paper shares the
opinion of all those believing that the theoretical framework emerging
from this proficuous interplay, active since several centuries, should not
exclude nonsmoothness. In fact, historically, the absence of
differentiability, when observed, was very often perceived as
a pathology (a ``miserable plague", in the Hermite's words \cite{HerSti04})
to be accurately avoided, whenever possible. Since theoretical
as well as applicative needs show that this is not always possible
(or reasonable), an area called nonsmooth analysis was
developed with the specific task to treat such a pathology,
especially for those problems arising in optimization. The present
paper is an attempt to show that nonsmoothness, along with evident
drawbacks and limitations, can also afford some benefits in the
analysis of optimization problems. This is done by considering
the favourable effects of the variational behaviour of functions
having a steepest descent rate, which is positive at some point.
The positivity of the steepest descent rate is not consistent
with the classical differentiability. In spite of this, it is the source
of several robustness phenomena having to do with perturbed
optimization: namely they relate to the local sharp minimality, the
superstability, the error bound property and its stability. A
proper general setting where to study the nature of all these
phenomena is that of metric spaces, an environment in which
it is not clear how to speak of smoothness and differentiability.
Nonetheless, nonsmooth analysis has succeeded in devising
generalized differential tools, which reveal to be adequate for
a metric space analysis.

The contents of this paper are organized as follows.
The next Section \ref{sec:2} starts with introducing the basic notion
of steepest descent rate, which will be the basic tool of analysis.
By means of that the positivity condition $(\mathcal{C})$ is
formulated. Such a condition could be regarded as a manifestation
of nonsmoothness in metric spaces. In the subsections included
in Section \ref{sec:2} condition $(\mathcal{C})$ is shown to be equivalent
to local sharp minimality, to superstability of a solution to a perturbed
optimization problem and, to a certain extent, to the local error bound.
What is more, in the presence of $(\mathcal{C})$ the last property
turns out to be stable with respect to perturbations with controlled
strong slope. Section \ref{sec:3} is devoted to the characterization
of the positivity condition $(\mathcal{C})$ in terms of global and local
$\Fam$-subdifferentials. In Section \ref{sec:4} the findings of the
previous section are specialized to a Banach space setting, where
widely employed nonsmooth analysis tools such as the Hadamard
generalized derivative, the regular subdifferential, the subdifferential
in the sense of convex analysis can be utilized.
Relationships with existing results from the related literature are
discussed. A final section is reserved to distil the spirit of
the analysis here exposed.


\section{Steepest descent rate and variational analysis in metric
spaces}
\label{sec:2}

Whenever $r\in\R\cup\{\pm\infty\}$, symbol $[r]_+$ stands for
$\max\{r,0\}$.
Given a metric space $(X,d)$, the closed ball with center $x\in X$ and
radius $r\ge 0$ is denoted by $\ball{x}{r}$. If $x\in X$ and $S
\subseteq X$, the distance of $x$ from $S$ is indicated by
$\dist{x}{S}=\inf_{y\in S}d(x,y)$, with the convention that
$\dist{x}{\varnothing}=+\infty$. Given a function
$f:X\longrightarrow\R\cup\{\pm\infty\}$, its domain is denoted by
$\dom f=\{x\in X:\ |f(x)|<\infty\}$. If $\alpha\in\R$, $\lev{f}{\alpha}
=\{x\in X:\ f(x)\le\alpha\}$ and $\levsupp{f}{\alpha}=X\backslash
\lev{f}{\alpha}$ indicate the $\alpha$-sublevel set
and the strict $\alpha$-superlevel set of $f$, respectively.
In particular, whenever it is $\inf_X f>-\infty$, $\Argmin(f)=
\lev{f}{\inf_Xf}$ denotes the set of all global minimizers
of $f$, if any.
Throughout the paper, the acronym l.s.c. stands for lower
semicontinuous.

The analysis of the variational properties of functions in metric
spaces will be mainly conducted by means of the following basic
tool.

\begin{definition}     \label{def:grsl}
Given a function $f:X\longrightarrow\R\cup\{\pm\infty\}$ defined
on a metric space $X$ and an element $\bar x\in\dom f$, the value
$$
   \grsl{f}(\bar x)=\liminf_{x\to\bar x}\frac{f(x)-f(\bar x)}{d(x,\bar x)},
$$
is called the {\em steepest descent rate} of $f$ at $\bar x$.
\end{definition}

To the best of the author's knowledge, the first employment of
the steepest descent rate in connection with extremum problems
in metric spaces goes back to \cite{Mari82}, where the notion of
inf-stationary point is introduced. Later on it found relevant
applications in nonsmooth analysis (see, for instance, \cite{Gian89}).
By his side, 
V.F. Demyanov contributed to popularize the use of this tool as well as of
its $k$-th order version: in several of his works he employed it for
formulating optimality conditions in metric spaces, as a
starting point for further developments in nondifferentiable optimization
(see, for instance, \cite{Demy00,Demy05,Demy05b,Demy10}, wherefrom
the notation of Definition \ref{def:grsl} has been borrowed). Further
recent employments can be found in \cite{Zasl13,Zasl14}.
Clearly the steepest
descent rate is strictly connected with another tool, widely utilized in
metric space variational analysis, known as strong slope. Given a
function $f:X\longrightarrow\R\cup\{\pm\infty\}$ and an element
$\bar x\in\dom f$, according to \cite{DeMaTo80}, by the strong slope
(or calmness rate) of $f$ at $\bar x$ the value
$$
   \stsl{f}(\bar x)=\left\{ \begin{array}{ll}
                       0, & \qquad\text{if $\bar x$ is a local minimizer for $f$},  \\
                       \displaystyle\limsup_{x\to\bar x} 
                       \frac{f(\bar x)-f(x)}{d(x,\bar x)}, & 
                       \qquad\text{otherwise} 
                 \end{array}
   \right.
$$
is meant. Now, it is readily seen that if $\bar x\in\dom f$ is a local minimizer of $f$,
then $\grsl{f}(\bar x)\ge 0$. Such a condition is evidently only necessary
for the local minimality of $\bar x$. Nevertheless, its enforcement
\begin{equation*}
    \grsl{f}(\bar x)>0    \leqno (\mathcal{C})
\end{equation*}
is a sufficient condition for local (strict) optimality (see \cite{Demy00}).
One of the aims of this article is to show that actually condition
$(\mathcal{C})$ can tell even more than that. Notice that,
whenever $(\mathcal{C})$ holds, it has to be
$\stsl{f}(\bar x)=0$. In circumstances
in which the annihilating of the strong slope does not allow to guarantee
those benefits deriving from nondegeneracy conditions, condition
$(\mathcal{C})$ turns out to provide meaningful insights into the variational
behaviour of $f$ near $\bar x$. As it will be illustrated in Sect. \ref{sec:4},
when functions are defined in more structured spaces, the occurence
of $(\mathcal{C})$ is essentially connected with the nonsmoothness
of $f$.


\subsection{Sharp minimizers and their superstability}
\label{subsec:2.1}

The next definition, originally introduced in \cite{Poly87}
in its global form within the context
of nondifferentiable convex optimization, captures
a possible variational behaviour of a function near a
local minimizer of it. It describes how the local minimum value
is attained at that point.

\begin{definition}      \label{def:locsharp}
Given a function  $f:X\longrightarrow\R\cup\{\pm\infty\}$, an
element $\bar x\in\dom f$ is said to be a {\it local sharp minimizer}
of $f$ if there exist positive $\sigma$ and $r$ such
that
\begin{equation}    \label{in:defsharmin}
    f(x)\ge f(\bar x)+\sigma d(x,\bar x),\quad\forall x\in
    \ball{\bar x}{r}.
\end{equation}
The value
$$
    \shar(f,\bar x)=\sup\{\sigma>0:\ \exists r>0 \hbox{ satisfying }
   (\ref{in:defsharmin})\}
$$
will be called {\it modulus of local sharpness} of $f$ at $\bar x$.
If inequality $(\ref{in:defsharmin})$ continues being true with
$\ball{\bar x}{r}$ replaced by $X$, then $\bar x$ is called
{\it global sharp minimizer} of $f$.
\end{definition}

\begin{example}
Here some simple situations are illustrated in which the notion
of sharp minimality naturally arises.

(i) Let $(X,d)$ be a complete metric space and let $T:X\longrightarrow
X$ be a contraction mapping, i.e. there exists $\alpha\in [0,1)$
such that $d(T(x_1),T(x_2))\le\alpha d(x_1,x_2)$, for every $x_1,\,
x_2\in X$. The Banach-Caccioppoli fixed point theorem ensures
the existence of a unique fixed point $\bar x\in X$, such that
$$
    d(x,\bar x)\le {1\over 1-\alpha}d(x,T(x)),\quad\forall x\in X.
$$
This inequality shows that the displacement function $f_T:X\longrightarrow
[0,+\infty)$ of $T$, defined as $f_T(x)=d(x,T(x))$, admits $\bar x$ as a global
sharp minimizer, with $\sigma=1-\alpha$.

(ii) Whenever $f:X\longrightarrow\R\cup\{\pm\infty\}$ admits $\bar x\in X$
as a local sharp minimizer and $\tilde f:X\longrightarrow\R\cup\{\pm\infty\}$
is such that
$$
    \tilde f(\bar x)=f(\bar x)\qquad\hbox{ and }\qquad \tilde f(x)\ge
    f(x),\quad\forall x\in \ball{\bar x}{r},
$$
for some $r>0$, $\bar x$ is a local sharp minimizer also for $\tilde f$.
So, if $X=\R^n$ and $\|\cdot\|$ stands for the Euclidean norm,
suppose that $f:\R^n\longrightarrow\R$ is any radial function
with profile $\pi_f:[0,+\infty)\longrightarrow[0,+\infty)$ satisfying
the inequality
$$
   \pi_f(t)\ge\sigma t,\quad\forall t\in [0,r]
$$
for some $r,\, \sigma>0$, and let $\bar x\in\R^n$. Then, every function
$\tilde f:\R^n\longrightarrow\R$ such that
$$
   \tilde f(x)\ge f(x-\bar x)+c,\quad\forall x\in\ball{\bar x}{r},
$$
with $c\in\R$, admits $\bar x$ as a local sharp minimizer.
Moreover, it results in $\shar(f,\bar x)\ge\sigma$. 
\end{example}

Notice that a local sharp minimizer is a strict minimizer of $f$
and an isolated point of the sublevel set $\lev{f}{f(\bar x)}$. As an immediate
consequence of Definition \ref{def:locsharp}, one obtains the
following local form of Tykhonov well-posedness: if $(x_n)_{n\in\N}$
is a sequence in $X$, whose elements lie sufficiently near $\bar x$,
and $f(x_n)\longrightarrow f(\bar x)$ as $n\to\infty$, then it
must be $x_n\longrightarrow\bar x$, as $n\to\infty$. Of course,
if $\bar x$ is a global sharp minimizer of $f$, the extremum problem
$\min_{x\in\X}f$ is Tykhonov well-posed. 

It was remarked already in \cite{Poly87} that condition $(\ref{in:defsharmin})$
can not be satisfied a priori by smooth functions, if considered
in a properly structured setting. In spite of this, sharp minimality
ensures good properties. For instance, it has been shown that
sharp minimality is a sufficient condition for finite termination of
the proximal point algorithm (see \cite{Rock76,Ferr91}).
A weaker version of the notion sharp minimality (known as weak
sharp minimality) gained an even major success, due to its recognized
relevance in the convergence analysis of algorithms for solving
extremum problems as well as in the study of stability of variational
problems (see \cite{BurFer93,BurDen02}).

Local sharpness of minimizers can be easily characterized
is terms of positivity of the steepest descent rate, as follows.

\begin{proposition}      \label{pro:Ccondshamin}
Given a function  $f:X\longrightarrow\R\cup\{\pm\infty\}$, an
element $\bar x\in\dom f$ is a local sharp minimizer iff
condition $(\mathcal{C})$ holds. Moreover $\grsl{f}(\bar x)=
\shar(f,\bar x)$.
\end{proposition}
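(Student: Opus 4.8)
The plan is to reduce the biconditional to the single identity $\grsl{f}(\bar x)=\shar(f,\bar x)$. Indeed, $(\mathcal{C})$ reads $\grsl{f}(\bar x)>0$, whereas by Definition \ref{def:locsharp} the point $\bar x$ is a local sharp minimizer precisely when the set $\{\sigma>0:\ \exists r>0 \text{ satisfying } (\ref{in:defsharmin})\}$ is nonempty, i.e. when $\shar(f,\bar x)>0$. Hence, once the identity is established, both conditions are simultaneously in force or simultaneously fail, and the stated equivalence is immediate.

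To prove $\shar(f,\bar x)\le\grsl{f}(\bar x)$, I would pick an arbitrary admissible $\sigma$, that is, a $\sigma>0$ for which some $r>0$ makes $(\ref{in:defsharmin})$ hold on $\ball{\bar x}{r}$. Restricting to $x\ne\bar x$ in that ball and dividing the inequality by $d(x,\bar x)>0$, one gets $\frac{f(x)-f(\bar x)}{d(x,\bar x)}\ge\sigma$ for all such $x$. Taking the $\liminf$ as $x\to\bar x$ on the left-hand side yields $\grsl{f}(\bar x)\ge\sigma$ directly from Definition \ref{def:grsl}. Since $\sigma$ was an arbitrary admissible modulus, passing to the supremum over all of them gives the desired inequality.

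For the reverse inequality $\grsl{f}(\bar x)\le\shar(f,\bar x)$, I would argue nontrivially only when $\grsl{f}(\bar x)>0$, the case $\grsl{f}(\bar x)\le 0$ being already settled by the first inequality (together with the convention $\sup\varnothing=0$). So fix any $\sigma$ with $0<\sigma<\grsl{f}(\bar x)$. Reading the steepest descent rate as $\grsl{f}(\bar x)=\sup_{r>0}\inf_{0<d(x,\bar x)<r}\frac{f(x)-f(\bar x)}{d(x,\bar x)}$ and using $\sigma<\grsl{f}(\bar x)$, I can select a radius $r>0$ for which $\frac{f(x)-f(\bar x)}{d(x,\bar x)}\ge\sigma$ whenever $0<d(x,\bar x)<r$. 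Multiplying back by $d(x,\bar x)$ recovers $(\ref{in:defsharmin})$ on any closed ball $\ball{\bar x}{r'}$ with $0<r'<r$ (the case $x=\bar x$ holding trivially), so $\sigma$ is an admissible modulus and $\shar(f,\bar x)\ge\sigma$. Letting $\sigma\uparrow\grsl{f}(\bar x)$ completes the argument.

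The only delicate point is the faithful translation between the $\liminf$ --- an infimum-of-suprema over punctured balls --- and the existence of a single radius on which the linear minorant $(\ref{in:defsharmin})$ is valid; here one must mind the distinction between strict and non-strict inequalities and the passage from the open punctured ball produced by the $\liminf$ to the closed ball demanded by Definition \ref{def:locsharp}, both handled by an arbitrarily small shrinking of the radius. Everything else is bookkeeping, and the $\liminf$ automatically accommodates the value $+\infty$ without altering the reasoning.
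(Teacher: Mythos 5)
Your proposal is correct and follows essentially the same route as the paper: both directions of the identity $\grsl{f}(\bar x)=\shar(f,\bar x)$ are obtained by translating between admissible moduli $\sigma$ in $(\ref{in:defsharmin})$ and the sup--inf form of the $\liminf$, with the equivalence then read off from the positivity of either quantity. The only cosmetic difference is that the paper closes the inequality $\grsl{f}(\bar x)\le\shar(f,\bar x)$ by contradiction with $\epsilon<\grsl{f}(\bar x)-\shar(f,\bar x)$, whereas you let $\sigma\uparrow\grsl{f}(\bar x)$ directly.
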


\begin{proof}
The proof of the first assertion is a straightforward consequence
of Definition \ref{def:grsl} and Definition \ref{def:locsharp}.
To see the inequality $\grsl{f}(\bar x)\ge\shar(\bar x)$, fix an
arbitrary $\sigma>0$ such that $(\ref{in:defsharmin})$ holds for
some $r>0$. One has
$$
    \grsl{f}(\bar x)=\sup_{\delta>0}\inf_{x\in\ball{\bar x}{\delta}
    \backslash\{\bar x\}}{f(x)-f(\bar x)\over d(x,\bar x)}\ge
    \inf_{x\in\ball{\bar x}{r}\backslash\{\bar x\}}
    {f(x)-f(\bar x)\over d(x,\bar x)}\ge\sigma.
$$
On the other hand, by Definition \ref{def:grsl}, for any
arbitrary $\epsilon>0$ there exists $r_\epsilon>0$ such that
$$
    \inf_{x\in\ball{\bar x}{r_\epsilon}\backslash\{\bar x\}}
    {f(x)-f(\bar x)\over d(x,\bar x)}>\grsl{f}(\bar x)-\epsilon,
$$
so inequality $(\ref{in:defsharmin})$  is satisfied by $\sigma=
\grsl{f}(\bar x)-\epsilon$ for $r=r_\epsilon$. If it were 
$\grsl{f}(\bar x)>\shar(f,\bar x)$, by taking $\epsilon<
\grsl{f}(\bar x)-\shar(f,\bar x)$ this would contradict the definition
of $\shar(f,\bar x)$.
\hfill$\square$
\end{proof}

The variational behaviour characterized by condition $(\mathcal{C})$
yields favorable stability properties of solutions in perturbed
optimization. Let us consider indeed the following family of perturbed 
problems:
$$
    \min_{x\in X}[f(x)+g(x)]   \leqno (\mathcal{P}_g)
$$
where $g\in \mathcal{G}_{\bar x}$ and
$$
    \mathcal{G}_{\bar x}=\{g:X\longrightarrow\R\cup\{\pm\infty\}:\ 
    \bar x\in\dom g\}.
$$
Notice that the additive perturbation term $g$ allows one to
cover very general perturbation effects. Clearly, when $g\equiv 0$
one gets the unperturbed problem $\min_{x\in X}f(x)$.
The next definition generalizes a strong concept of stability
in optimization, which was proposed again in \cite{Poly87}.

\begin{definition}        \label{def:supersta}
With reference to a family of perturbed problems $(\mathcal{P}_g)$,
a local solution $\bar x\in\dom f$ of $(\mathcal{P}_0)$ is called
{\it superstable} (for $(\mathcal{P}_g)$) if there exists $\epsilon_0>0$ such
that $\bar x$ locally solves $(\mathcal{P}_g)$, for every $g\in
\mathcal{G}_{\bar x}$, with $\stsl{g}(\bar x)<\epsilon_0$.
\end{definition}

\begin{remark}
As a comment to Definition \ref{def:supersta},
observe that condition $\stsl{g}(\bar x)<\epsilon_0$ holds in
particular, whenever
$g\in\mathcal{G}_{\bar x}$ is locally Lipschitz around $\bar x$,
with Lipschitz constant 
$$
    \Liploc{g}{\bar x}=\limsup_{x_1,x_2\to\bar x\atop x_1\ne x_2}
   {|g(x_1)-g(x_2)|\over d(x_1,x_2)}<\epsilon_0.
$$
Thus, if $\bar x$ is superstable, it persists as a solution to $(\mathcal{P}_g)$
under locally Lipschitz perturbations of $f$. More generally, if $\bar x$ is
superstable it locally solves any problem $\min_{x\in X}\tilde f(x)$,
for every $\tilde f\in\Ptb{f}{\bar x}{\epsilon_0}$, where
$$
   \Ptb{f}{\bar x}{\epsilon_0}=\left\{\tilde f\in\mathcal{G}_{\bar x}:\ 
   \limsup_{x\to\bar x}{|\tilde f(x)-f(x)-(\tilde f(\bar x)-f(\bar x))|
   \over d(x,\bar x)}\le\epsilon_0\right\}.
$$
Indeed, if $\tilde f\in\Ptb{f}{\bar x}{\epsilon_0}$, then
$$
    \stsl{(\tilde f-f)}(\bar x)\le\limsup_{x\to\bar x}
   {|\tilde f(x)-f(x)-(\tilde f(\bar x)-f(\bar x))|
   \over d(x,\bar x)}.
$$
The above kind of perturbations has been already considered in
\cite{KrVaTh10} and will be again employed here in a subsequent
section.
\end{remark}

The next proposition reveals that the superstability property,
as presented in Definition \ref{def:supersta},
is actually a reformulation of the local sharp minimality.

\begin{proposition}      \label{pro:shamincharms}
Let $f:X\longrightarrow\R\cup\{\pm\infty\}$ be a given function
and let $\bar x\in\dom f$. Then, $\bar x$ is a local sharp
minimizer iff it is superstable for $(\mathcal{P}_g)$.
\end{proposition}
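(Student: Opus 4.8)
The plan is to route everything through the positivity condition $(\mathcal{C})$. Proposition~\ref{pro:Ccondshamin} already equates local sharp minimality of $\bar x$ with $\grsl{f}(\bar x)>0$ and identifies $\grsl{f}(\bar x)=\shar(f,\bar x)$, so it suffices to prove that $\bar x$ is superstable for $(\mathcal{P}_g)$ if and only if $(\mathcal{C})$ holds. I would establish the two implications separately.

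For the implication from sharp minimality to superstability, I would set $\epsilon_0:=\grsl{f}(\bar x)=\shar(f,\bar x)>0$ and fix any $g\in\mathcal{G}_{\bar x}$ with $\stsl{g}(\bar x)<\epsilon_0$. Choosing an intermediate value $\sigma$ with $\stsl{g}(\bar x)<\sigma<\grsl{f}(\bar x)$, the definition of $\shar(f,\bar x)$ furnishes a radius $r_1>0$ with $f(x)-f(\bar x)\ge\sigma\, d(x,\bar x)$ on $\ball{\bar x}{r_1}$, while the definition of the strong slope furnishes a radius $r_2>0$ with $g(x)-g(\bar x)>-\sigma\, d(x,\bar x)$ for $x\in\ball{\bar x}{r_2}\setminus\{\bar x\}$. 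Adding these two estimates on the intersection of the balls yields $f(x)+g(x)>f(\bar x)+g(\bar x)$ for every $x\ne\bar x$ nearby, so $\bar x$ locally solves $(\mathcal{P}_g)$; taking $g\equiv 0$ in the same argument simultaneously certifies that $\bar x$ solves $(\mathcal{P}_0)$, which is the remaining requirement of Definition~\ref{def:supersta}.

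For the converse I would argue by contradiction. Since a superstable $\bar x$ is in particular a local solution of $(\mathcal{P}_0)$, it is a local minimizer of $f$ and hence $\grsl{f}(\bar x)\ge 0$; assume $(\mathcal{C})$ fails, i.e. $\grsl{f}(\bar x)=0$. The $\liminf$ in Definition~\ref{def:grsl} then delivers a sequence $x_n\to\bar x$ with $x_n\ne\bar x$ and $(f(x_n)-f(\bar x))/d(x_n,\bar x)\to 0$. The decisive device is the conical perturbation $g(x):=-\lambda\, d(x,\bar x)$ with $0<\lambda<\epsilon_0$: it belongs to $\mathcal{G}_{\bar x}$, is globally $\lambda$-Lipschitz, and satisfies $\stsl{g}(\bar x)=\lambda<\epsilon_0$. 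Evaluating along the sequence gives $(f+g)(x_n)-(f+g)(\bar x)=d(x_n,\bar x)\,[(f(x_n)-f(\bar x))/d(x_n,\bar x)-\lambda]<0$ for all large $n$, so $\bar x$ fails to locally solve $(\mathcal{P}_g)$, contradicting superstability. Hence $\grsl{f}(\bar x)>0$, and Proposition~\ref{pro:Ccondshamin} returns local sharp minimality.

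The two ball-intersection estimates in the forward direction are routine; the single genuine idea is the choice of $g=-\lambda\, d(\cdot,\bar x)$ in the converse, whose strong slope is \emph{exactly} $\lambda$ and therefore precisely controllable, while it still erodes any vanishing first-order margin of $f$. I expect the one point requiring care to be the piecewise definition of the strong slope: the estimate $g(x)-g(\bar x)>-\sigma\, d(x,\bar x)$ must be made to hold uniformly near $\bar x$ by a short case split according to whether or not $\bar x$ is a local minimizer of $g$ (in the minimizer case $\stsl{g}(\bar x)=0$ and the inequality is immediate, otherwise it follows from the $\limsup$).
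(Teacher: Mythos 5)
Your proof is correct and takes essentially the same route as the paper's: the forward direction is the paper's estimate $\grsl{(f+g)}(\bar x)\ge\grsl{f}(\bar x)-\stsl{g}(\bar x)>0$ unpacked into explicit radii (with the same case split hidden in the inequality $\grsl{g}(\bar x)\ge-\stsl{g}(\bar x)$), and the converse rests on exactly the paper's device, the perturbation $g=-\lambda\, d(\cdot,\bar x)$ with $\stsl{g}(\bar x)=\lambda$. The only cosmetic difference is that the paper reads the sharpness inequality $f(x)\ge f(\bar x)+\lambda\, d(x,\bar x)$ directly off the local optimality of $\bar x$ for $(\mathcal{P}_g)$, whereas you wrap the same computation in a contradiction with a minimizing sequence.
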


\begin{proof}
Let us start with supposing $\bar x$ to be a local sharp minimizer
of $f$. Then condition $(\mathcal{C})$ does hold. So, take
$\epsilon_0=\grsl f(\bar x)$. For any $g\in
\mathcal{G}_{\bar x}$, with $\stsl{g}(\bar x)<\epsilon_0$ one
obtains
\begin{eqnarray*}
    \grsl{(f+g)}(\bar x)&\ge & \liminf_{x\to\bar x} \frac{f(x)-
     f(\bar x)}{d(x,\bar x)}+\liminf_{x\to\bar x}\frac{g(x)-
     g(\bar x)}{d(x,\bar x)} \ge \grsl{f}(\bar x)-\stsl{g}(\bar x) \\
    &>&0.
\end{eqnarray*}
Since $(\mathcal{C})$ is a sufficient optimality condition,
this implies that $\bar x$ is also a local minimizer of $f+g$, for
every $g\in\mathcal{G}_{\bar x}$, with $\stsl{g}(\bar x)<\epsilon_0$.

Suppose now that $\bar x$ satisfies Definition  \ref{def:supersta}
with some $\epsilon_0>0$. Choose $\epsilon\in (0,\epsilon_0)$
and observe that the function $g:X\longrightarrow\R$ defined
by $g(x)=-\epsilon d(x,\bar x)$ belongs to $\mathcal{G}_{\bar x}$. Moreover,
one sees that in such case it is $\stsl{g}(\bar x)=\epsilon$. Thus
for some $r>0$ it must hold
$$
   f(x)+g(x)=f(x)-\epsilon d(x,\bar x)\ge f(\bar x)+g(\bar x),\quad
   \forall x\in\ball{\bar x}{r},
$$
which allows one to conclude that $\bar x$ is a local sharp minimizer
of $f$. This completes the proof.
\hfill$\square$
\end{proof}

\begin{remark}   \label{rem:shaminsta}
It is worth noting that the proof of Proposition \ref{pro:shamincharms}
actually reveals that if condition $(\mathcal{C})$ is valid for $f$
at $\bar x$, it continues being valid for any perturbed function 
$f+g$ at the same point,
for every $g\in\mathcal{G}_{\bar x}$, with  $\stsl{g}(\bar x)<
\grsl{f}(\bar x)$. In other words, sharp minimality itself is stable under
this kind of perturbation.
\end{remark}


\subsection{Sufficiency in optimality conditions}
\label{subsec:2.2}

We have seen that
the steepest descent rate enables one to express a sufficient
condition for local optimality. The next proposition shows how the
same notion can be employed in formulating a sufficient condition
for the (global) solution existence.

\begin{proposition}    \label{pro:existsufcon}
Let $(X,d)$ be a complete metric space and let $f:X\longrightarrow\R
\cup\{+\infty\}$ be a l.s.c. function with $\inf_{X}f>-\infty$. If
there exists $\sigma>0$ such that
\begin{equation}     \label{in:hypthm}
    \sup_{x\in\levsupp{f}{\inf_{X}f}\cap\dom f}\grsl{f}(x)<-\sigma,
\end{equation}
then $\Argmin(f)\ne\varnothing$.
\end{proposition}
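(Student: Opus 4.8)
The plan is to invoke Ekeland's variational principle, which is precisely the device that turns a uniform lower bound on the descent rate into an honest minimizer. First I would record the elementary identity linking the two slope notions of the section: for any non-minimizer $x\in\dom f$ one has $\grsl{f}(x)=-\stsl{f}(x)$, since $\grsl{f}(x)=\liminf_{y\to x}\frac{f(y)-f(x)}{d(y,x)}=-\limsup_{y\to x}\frac{f(x)-f(y)}{d(y,x)}$. Thus hypothesis $(\ref{in:hypthm})$ may be read as $\stsl{f}(x)>\sigma$ at every point of $\levsupp{f}{\inf_X f}\cap\dom f$; this is exactly the ``large slope away from the minimum'' condition under which Ekeland's principle is classically applied. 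After discarding the trivial case $\dom f=\varnothing$, I set $m:=\inf_X f$, which is finite because $f$ is proper and bounded below.

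Next I would run the argument with Ekeland's principle doing the heavy lifting. Fix $\epsilon>0$ and pick $x_0$ with $f(x_0)\le m+\epsilon$. Applying Ekeland's variational principle to the proper, l.s.c., bounded-below function $f$ on the complete space $X$, with parameter $\lambda:=\epsilon/\sigma$, produces a point $v\in X$ satisfying $f(v)\le f(x_0)$ and, crucially, $f(x)>f(v)-(\epsilon/\lambda)\,d(x,v)$ for every $x\ne v$.

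The key step is then to read off a lower bound on the steepest descent rate at $v$ from this perturbed-minimality inequality. Dividing by $d(x,v)$ gives $\frac{f(x)-f(v)}{d(x,v)}>-\epsilon/\lambda$ for all $x\ne v$, and passing to $\liminf_{x\to v}$ yields $\grsl{f}(v)\ge-\epsilon/\lambda=-\sigma$. Now I would argue that $v$ must be a global minimizer: it lies in $\dom f$ (because $f(v)\le f(x_0)<+\infty$), so if it were the case that $f(v)>m$, i.e. $v\in\levsupp{f}{m}$, then hypothesis $(\ref{in:hypthm})$ would force $\grsl{f}(v)\le\sup_{x\in\levsupp{f}{m}\cap\dom f}\grsl{f}(x)<-\sigma$, contradicting the bound just obtained. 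Hence $f(v)=m$ and $\Argmin(f)\ne\varnothing$.

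The only genuine obstacle is identifying the right auxiliary tool: the positivity-of-slope hypothesis is of ``global descent'' type and does not by itself localize a minimizer, so some completeness-driven principle is indispensable, and Ekeland's principle with the sharp choice $\lambda=\epsilon/\sigma$ is exactly what converts the strict inequality $\grsl{f}<-\sigma$ on non-minimizers into a contradiction. A secondary point to handle carefully is the matching of strict versus non-strict inequalities: Ekeland yields $\grsl{f}(v)\ge-\sigma$, while the hypothesis supplies a \emph{strict} bound $<-\sigma$ on non-minimizers, so the two are incompatible precisely at the boundary value and no $\epsilon$-degradation of the constant is needed.
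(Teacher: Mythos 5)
Your proposal is correct and follows essentially the same route as the paper: apply Ekeland's variational principle with the perturbation constant tuned to $\sigma$, deduce $\grsl{f}(v)\ge-\sigma$ at the Ekeland point, and contradict the strict hypothesis $(\ref{in:hypthm})$ unless $v$ is a global minimizer. The only cosmetic difference is that the paper fixes $\epsilon=\sigma$ and $\lambda=1$ at the outset rather than carrying a free $\epsilon$ with $\lambda=\epsilon/\sigma$, which yields the same perturbed-minimality inequality.
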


\begin{proof}
If $f\equiv+\infty$ the thesis is trivially true as it is $\Argmin(f)
=X$. Otherwise,
take an element $x_0\in X$ such that $f(x_0)<\inf_{X}f+\sigma$.
Since $f$ is l.s.c. and bounded from below, and $X$ is metrically
complete by hypothesis, it is possible to invoke the Ekeland
variational principle.
According to it, there exists $\bar x\in\ball{x_0}{1}$ such that
$f(\bar x)\le f(x_0)$, so $\bar x\in\dom f$, and
\begin{equation}      \label{in:EVP3}
    f(\bar x)<f(x)+\sigma d(x,\bar x),\quad\forall x\in X\backslash
   \{\bar x\}.
\end{equation}
Suppose now that $\bar x\in\levsupp{f}{\inf_{X}f}$. Then one finds
as a consequence of inequality $(\ref{in:EVP3})$ that
$\grsl{f}(\bar x)\ge -\sigma$, which contradicts the hypothesis
$(\ref{in:hypthm})$. Therefore it must be $\bar x\in\lev{f}{\inf_Xf}$,
so $\bar x$ turns out to be a global minimizer of $f$.
\hfill$\square$
\end{proof}


\subsection{Error bound for inequalities}
\label{subsec:2.3}

The notion of local/global error bound is known to play a key role
in optimization and variational analysis.
Among other topics, it emerges as a crucial concept
in deriving exact penalty functions of constrained optimization
problems (see \cite{FacPan03}, Ch. 6.8) as well as in connection with
the property of calmness (equivalently, metric subregularity)
(see, for instance, \cite{FaHeKrOu10,Peno13,Krug14}).

\begin{definition}
Given a function  $f:X\longrightarrow\R\cup\{\pm\infty\}$ and an
element $\bar x\in X$, with $f(\bar x)=0$, $f$ is said to admit a
{\it local error bound} at $\bar x$ if there exist reals $c>0$
and $r>0$ such that
\begin{equation}    \label{in:locerbodef}
     \dist{x}{\lev{f}{0}}\le c[f(x)]_+,\quad\forall x\in\ball{\bar x}{r}.
\end{equation}
If inequality $(\ref{in:locerbodef})$ continues to hold with
$\ball{\bar x}{r}$ replaced by $X$, $f$ is said to admit a
{\it global error bound} at $\bar x$
\end{definition}

\begin{remark}
As done for instance in \cite{KrVaTh10},
it is worth observing that the best (lower) bound of all
constants $c$ for which inequality $(\ref{in:locerbodef})$
is true coincides with the value $(\Err{f}(\bar x))^{-1}$, where
$$
    \Err{f}(\bar x)=\liminf_{x\to\bar x\atop f(x)>0}
    {f(x)\over\dist{x}{\lev{f}{0}}}
$$
is  called the error bound modulus (aka conditioning rate)
of $f$ at $\bar x$.
\end{remark}

\begin{proposition}      \label{pro:erboCcon}
Let $f:X\longrightarrow\R\cup\{\pm\infty\}$ and $\bar x\in\dom f$
be such that $f(\bar x)=0$. If condition $(\mathcal{C})$ holds, then
$f$ admits a local error bound at $\bar x$. Moreover, it holds
$$
    \grsl{f}(\bar x)\le\Err{f}(\bar x).
$$
\end{proposition}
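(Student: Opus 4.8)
The plan is to exploit the equivalence between condition $(\mathcal{C})$ and local sharp minimality established in Proposition \ref{pro:Ccondshamin}. Since $f(\bar x)=0$, condition $(\mathcal{C})$ furnishes constants $\sigma>0$ and $r>0$ (one may take any $\sigma<\grsl{f}(\bar x)$) for which
$$
   f(x)\ge\sigma d(x,\bar x),\qquad\forall x\in\ball{\bar x}{r}.
$$
The first observation I would record is that this inequality localizes the zero-sublevel set, namely $\lev{f}{0}\cap\ball{\bar x}{r}=\{\bar x\}$: indeed any $x\in\ball{\bar x}{r}$ with $x\ne\bar x$ satisfies $f(x)\ge\sigma d(x,\bar x)>0$, so within the ball the only point where $f\le 0$ is $\bar x$ itself.

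Next I would derive the error bound. As $f(\bar x)=0$ gives $\bar x\in\lev{f}{0}$, the elementary estimate $\dist{x}{\lev{f}{0}}\le d(x,\bar x)$ holds for every $x\in X$. Combining it with the sharp minimality inequality, for $x\in\ball{\bar x}{r}$ with $f(x)>0$ one gets $d(x,\bar x)\le f(x)/\sigma$, whence $\dist{x}{\lev{f}{0}}\le\sigma^{-1}[f(x)]_+$. For the remaining points of the ball, i.e.\ those with $f(x)\le 0$, the first observation forces $x=\bar x$, so both sides of the inequality vanish. Thus $(\ref{in:locerbodef})$ holds on $\ball{\bar x}{r}$ with $c=\sigma^{-1}$, which proves the local error bound.

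Finally, for the modulus estimate I would compare the two $\liminf$'s defining $\grsl{f}(\bar x)$ and $\Err{f}(\bar x)$. Using $\dist{x}{\lev{f}{0}}\le d(x,\bar x)$ once more, every $x\ne\bar x$ near $\bar x$ with $f(x)>0$ satisfies
$$
   \frac{f(x)}{\dist{x}{\lev{f}{0}}}\ge\frac{f(x)}{d(x,\bar x)}.
$$
The crucial point is that, by the localization above, for $x\ne\bar x$ in $\ball{\bar x}{r}$ the constraint $f(x)>0$ appearing in the definition of $\Err{f}(\bar x)$ is automatically met; hence the two $\liminf$'s range over the same points as $x\to\bar x$. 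Passing to the $\liminf$ in the displayed inequality then yields $\Err{f}(\bar x)\ge\grsl{f}(\bar x)$, as desired.

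The step demanding the most care is this last one: one must verify that restricting the $\liminf$ to $\{x:\ f(x)>0\}$ leaves its value unchanged here, which is exactly what the identity $\lev{f}{0}\cap\ball{\bar x}{r}=\{\bar x\}$ guarantees. The rest is routine, the only mild technicality being the degenerate situations (for instance when $\bar x$ is isolated in $X$, where both quantities equal $+\infty$ and the inequality holds trivially).
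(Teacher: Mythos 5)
Your proof is correct and follows essentially the same route as the paper's: extract the sharp-minimality inequality from $(\mathcal{C})$, localize $\lev{f}{0}$ to $\{\bar x\}$ near $\bar x$, and deduce from this both the error bound and the modulus inequality. The only (harmless) deviations are that you use just the one-sided estimate $\dist{x}{\lev{f}{0}}\le d(x,\bar x)$ where the paper establishes equality on a half-radius ball, and that you obtain $\grsl{f}(\bar x)\le\Err{f}(\bar x)$ by directly comparing the two $\liminf$'s rather than via the best-constant interpretation of $(\Err{f}(\bar x))^{-1}$.
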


\begin{proof}
By virtue of condition $(\mathcal{C})$, $\bar x$ is a strict local
minimizer of $f$. More precisely, for every $\epsilon\in (0,\grsl{f}
(\bar x))$, there exists $\delta_\epsilon>0$ such that
\begin{equation}     \label{in:usecondC}
    f(x)\ge (\grsl{f}(\bar x)-\epsilon)d(x,\bar x)>0,\quad\forall
    x\in\ball{\bar x}{\delta_\epsilon}\backslash\{\bar x\}.
\end{equation}
This entails that
$$
    \lev{f}{0}\cap\ball{\bar x}{\delta_\epsilon}=\{\bar x\}.
$$
Consequently, one finds
$$
    \dist{x}{ \lev{f}{0}}=d(x,\bar x),\quad\forall x\in
    \ball{\bar x}{\delta_\epsilon/2},
$$
whence, taking account of inequality $(\ref{in:usecondC})$, it
readily follows
$$
    \dist{x}{\lev{f}{0}}\le (\grsl{f}(\bar x)-\epsilon)^{-1}[f(x)]_+,
    \quad\forall x\in\ball{\bar x}{\delta_\epsilon/2}.
$$
This proves that $f$ admits a local error bound at $\bar x$.
Besides, since $(\Err{f}(\bar x))^{-1}$ is the lower bound of all
constants $c$ satisfying inequality $(\ref{in:locerbodef})$,
it follows
$$
     \grsl{f}(\bar x)-\epsilon\le\Err{f}(\bar x).
$$
The arbitrariness of $\epsilon$ allows one to conclude the proof.
\hfill$\square$
\end{proof}

Throught simple counterexamples, one quickly realizes that the
local error bound property can take place even if $\grsl{f}(\bar x)
=0$, namely condition $(\mathcal{C})$ is in general only sufficient
for it. Nonetheless, if $\bar x$ strictly minimizes $f$, condition
$(\mathcal{C})$ becomes also necessary.

\begin{proposition}
Let $f:X\longrightarrow\R\cup\{\pm\infty\}$ and $\bar x\in\dom f$
be such that $f(\bar x)=0$. If $\bar x$ is a local strict minimizer
of $f$ and $f$ admits a local error bound at $\bar x$, then
condition $(\mathcal{C})$ holds true.
\end{proposition}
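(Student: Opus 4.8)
The plan is to play the two hypotheses against each other: local strict minimality forces the zero-sublevel set $\lev{f}{0}$ to reduce locally to the singleton $\{\bar x\}$, and the error bound inequality then converts directly into a lower estimate for the difference quotient defining $\grsl{f}(\bar x)$. Since $f(\bar x)=0$, condition $(\mathcal{C})$ is exactly the assertion that $\liminf_{x\to\bar x} f(x)/d(x,\bar x)>0$, so this is what I aim to produce.

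First I would record the data supplied by the hypotheses. Local strict minimality furnishes $\rho>0$ with $f(x)>f(\bar x)=0$ for every $x\in\ball{\bar x}{\rho}\backslash\{\bar x\}$; in particular $\lev{f}{0}\cap\ball{\bar x}{\rho}=\{\bar x\}$, and moreover $[f(x)]_+=f(x)$ throughout this punctured ball. The local error bound provides constants $c>0$ and $r>0$ such that $\dist{x}{\lev{f}{0}}\le c[f(x)]_+$ for all $x\in\ball{\bar x}{r}$.

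The crux is to establish the identity $\dist{x}{\lev{f}{0}}=d(x,\bar x)$ for $x$ close enough to $\bar x$. Since $\bar x\in\lev{f}{0}$, the inequality $\dist{x}{\lev{f}{0}}\le d(x,\bar x)$ is automatic. For the reverse, I would fix $x\in\ball{\bar x}{\rho/2}$ and test an arbitrary competitor $y\in\lev{f}{0}$ with $y\ne\bar x$: by the localization above such a $y$ cannot lie in $\ball{\bar x}{\rho}$, so $d(\bar x,y)>\rho$, and the triangle inequality yields $d(x,y)\ge d(\bar x,y)-d(x,\bar x)>\rho-\rho/2\ge d(x,\bar x)$. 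Thus no point of $\lev{f}{0}$ sits closer to $x$ than $\bar x$, and the distance identity follows. I expect this to be the main (if modest) obstacle: it is here that one must neutralize the possible influence of far-away points of $\lev{f}{0}$ on the distance function, and the purely local information coming from strict minimality does not suffice by itself — the shrinking-ball estimate is precisely what bridges the gap.

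Finally I would assemble the pieces on $\ball{\bar x}{\delta}$ with $\delta=\min\{r,\rho/2\}$. For $x$ in this ball with $x\ne\bar x$, substituting the distance identity and $[f(x)]_+=f(x)$ into the error bound inequality gives $d(x,\bar x)\le c\,f(x)$, hence $f(x)/d(x,\bar x)\ge 1/c$. Passing to the $\liminf$ as $x\to\bar x$ then delivers $\grsl{f}(\bar x)\ge 1/c>0$, which is condition $(\mathcal{C})$. I would remark that the same computation shows $\grsl{f}(\bar x)=\Err{f}(\bar x)$ on this neighbourhood, so, combined with Proposition \ref{pro:erboCcon}, the present assumptions in fact force the equality $\grsl{f}(\bar x)=\Err{f}(\bar x)$.
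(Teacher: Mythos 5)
Your proof is correct and follows essentially the same route as the paper's: localize $\lev{f}{0}$ to $\{\bar x\}$ via strict minimality, identify $\dist{x}{\lev{f}{0}}$ with $d(x,\bar x)$ on a half-radius ball, and read off $\grsl{f}(\bar x)\ge 1/c$ from the error bound. The only difference is that you spell out, via the triangle inequality, the distance identity that the paper merely asserts; your closing remark that the hypotheses force $\grsl{f}(\bar x)=\Err{f}(\bar x)$ is also correct.
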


\begin{proof}
By the local error bound assumption at $\bar x$, one has that
for some $\delta$, $c>0$ it is
$$
   \dist{x}{\lev{f}{0}}\le c[f(x)]_+,\quad\forall x\in\ball{\bar x}{\delta}.
$$
Since $f(\bar x)=0$ and $\bar x$ is a strict local minimizer of $f$,
by a proper reduction of the value of $\delta$, one has
$$
   f(x)>0,\quad\forall x\in\ball{\bar x}{\delta}.
$$
Consequently, it results in
$$
   [f(x)]_+=f(x) \qquad\hbox{and}\qquad\dist{x}{\lev{f}{0}}
   =d(x,\bar x),\qquad\forall x\in\ball{\bar x}{\delta/2}.
$$
It follows
$$
    \inf_{x\in\ball{\bar x}{\delta/2}\backslash\{\bar x\}}
   {f(x)-f(\bar x)\over d(x,\bar x)}\ge {1\over c},
$$
whence one obtains the thesis.
\hfill$\square$
\end{proof}

While in general condition $(\mathcal{C})$ can not characterize
the validity of the local error bound property for $f$ at $\bar x$,
it enables one to single out a stronger property than that: actually,
it guarantees the local error bound property to hold for the whole family
of functions, which are sufficiently small perturbations of $f$, in
a sense clarified by the next proposition.

\begin{corollary}     \label{cor:erbosta}
Let $f:X\longrightarrow\R\cup\{\pm\infty\}$ and $\bar x\in\dom f$
be such that $f(\bar x)=0$. If condition $(\mathcal{C})$ holds, then
$f+g$ admits a local error bound at $\bar x$, for every $g\in
\mathcal{G}_{\bar x}$ such that $g(\bar x)=0$ and $\stsl{g}(\bar x)
<\grsl{f}(\bar x)$, and it results in
$$
    \grsl{f}(\bar x)-\stsl{g}(\bar x)\le\Err{(f+g)}(\bar x).
$$
\end{corollary}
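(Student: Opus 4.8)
The plan is to obtain this corollary as a direct combination of two facts already established in the excerpt: the stability of condition $(\mathcal{C})$ under perturbations with controlled strong slope (the key inequality in the proof of Proposition \ref{pro:shamincharms}, recorded in Remark \ref{rem:shaminsta}), and the implication ``$(\mathcal{C})$ $\Rightarrow$ local error bound'' together with its quantitative estimate (Proposition \ref{pro:erboCcon}). The idea is to apply Proposition \ref{pro:erboCcon} not to $f$ but to the perturbed function $h=f+g$, once we have checked that $h$ satisfies the two hypotheses required there, namely $h(\bar x)=0$ and condition $(\mathcal{C})$.

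First I would verify the normalization. Since $g\in\mathcal{G}_{\bar x}$ with $g(\bar x)=0$ and $f(\bar x)=0$ by assumption, we have $(f+g)(\bar x)=f(\bar x)+g(\bar x)=0$, so it is meaningful to speak of the error bound of $f+g$ at $\bar x$. Next, reproducing the computation from the proof of Proposition \ref{pro:shamincharms}, I would use subadditivity of the $\liminf$ to get
$$
   \grsl{(f+g)}(\bar x)\ge\liminf_{x\to\bar x}\frac{f(x)-f(\bar x)}{d(x,\bar x)}
   +\liminf_{x\to\bar x}\frac{g(x)-g(\bar x)}{d(x,\bar x)}
   \ge\grsl{f}(\bar x)-\stsl{g}(\bar x).
$$
Here the lower bound on the second term follows from the identity $\liminf_{x\to\bar x}\frac{g(x)-g(\bar x)}{d(x,\bar x)}=-\limsup_{x\to\bar x}\frac{g(\bar x)-g(x)}{d(x,\bar x)}\ge-\stsl{g}(\bar x)$. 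Because the hypothesis $\stsl{g}(\bar x)<\grsl{f}(\bar x)$ makes the right-hand side strictly positive, condition $(\mathcal{C})$ holds for $f+g$ at $\bar x$.

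With $h=f+g$ now known to satisfy $h(\bar x)=0$ and $(\mathcal{C})$, I would invoke Proposition \ref{pro:erboCcon} with $f$ replaced by $f+g$. This yields at once that $f+g$ admits a local error bound at $\bar x$ and that $\grsl{(f+g)}(\bar x)\le\Err{(f+g)}(\bar x)$. Chaining this with the displayed inequality above gives
$$
   \grsl{f}(\bar x)-\stsl{g}(\bar x)\le\grsl{(f+g)}(\bar x)\le\Err{(f+g)}(\bar x),
$$
which is exactly the claimed estimate.

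The argument is essentially an assembly of prior results, so I do not expect a genuine obstacle; the only point requiring care is the lower bound $\liminf_{x\to\bar x}\frac{g(x)-g(\bar x)}{d(x,\bar x)}\ge-\stsl{g}(\bar x)$, which must be checked against the case distinction in the definition of the strong slope. If $\bar x$ is a local minimizer of $g$, then $g(x)\ge g(\bar x)$ near $\bar x$, so the $\liminf$ is $\ge 0=-\stsl{g}(\bar x)$; otherwise the $\liminf$–$\limsup$ identity delivers the bound with equality. Once this is settled, the rest is the direct combination described above.
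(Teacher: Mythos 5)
Your proposal is correct and follows essentially the same route as the paper's own proof: verify that $(\mathcal{C})$ transfers to $f+g$ via the estimate $\grsl{(f+g)}(\bar x)\ge\grsl{f}(\bar x)-\stsl{g}(\bar x)$ (the content of Remark \ref{rem:shaminsta}), then apply Proposition \ref{pro:erboCcon} to $f+g$. Your extra care with the case distinction in the definition of the strong slope is a welcome detail the paper leaves implicit, but it does not change the argument.
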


\begin{proof}
As noticed in Remark \ref{rem:shaminsta}, condition $(\mathcal{C})$
is stable under additive perturbation of $f$ by $g\in
\mathcal{G}_{\bar x}$, provided that $\stsl{g}(\bar x)<\grsl{f}(\bar x)$.
Then, it suffices to apply Proposition \ref{pro:erboCcon} to function
$f+g$ at $\bar x$ and to recall that $\grsl{(f+g)}(\bar x)\ge\grsl{f}(\bar x)
-\stsl{g}(\bar x)$.
\hfill$\square$
\end{proof}

\begin{remark}        \label{rem:erbosta}
It is worth noting that, under the hypotheses of Corollary \ref{cor:erbosta},
the local error bound at $\bar x$ remains in force for every $\tilde f\in
\Ptb{f}{\bar x}{\epsilon}$, with $\epsilon<\grsl{f}(\bar x)$.
\end{remark}


\section{Subdifferential characterizations of condition $(\mathcal{C})$}
\label{sec:3}

In what follows, given a metric space $(X,d)$, ${\rm Lip}(X)$
will denote the vector space of all Lipschitz continuous functionals
defined on $X$, equipped with the quasinorm
$$
   \|\phi\|_{\rm Lip}=\sup_{x_1,x_2\in X\atop x_1\ne x_2}
  {|\phi(x_1)-\phi(x_2)|\over d(x_1,x_2)},\quad\phi\in {\rm Lip}(X),
$$
which induces the quasimetric $d_{\rm Lip}:{\rm Lip}(X)\times
{\rm Lip}(X)\longrightarrow [0,+\infty)$. If introducing the equivalence
relation over ${\rm Lip}(X)$
$$
   \phi_1\sim\phi_2\qquad \hbox{ iff }\qquad
  \exists c\in\R:\ \phi_1(x)-\phi_2(x)=c,\quad\forall x\in X,
$$
then $\|\cdot\|_{\rm Lip}$ is well defined on equivalence classes
and $({\rm Lip}(X)/_\sim,\|\cdot\|_{\rm Lip})$ turns out to be
a Banach space (see \cite{PalRol97}). Once fixed a nonempty
family $\Fam\subseteq {\rm Lip}(X)/_\sim$, it is possible to introduce
related concepts of subgradients and subdifferentials of functions
defined on $X$, as done for instance in \cite{PalRol97} (see also
references therein).
For the sake of notational simplicity, elements in ${\rm Lip}(X)/_\sim$
will be henceforth indicated with the same symbols as their
representatives in ${\rm Lip}(X)$.

\begin{definition}     \label{def:Famsubds}
Given a function $f:X\longrightarrow\R\cup\{\pm\infty\}$, $\bar
x\in\dom f$, and $\Fam\subseteq {\rm Lip}(X)/_\sim$, the set
$$
    \Fsubd f(\bar x)=\{\phi\in\Fam:\ f(x)-f(\bar x)\ge\phi(x)-\phi(\bar x),
    \quad\forall x\in X\}
$$
is called the {\it $\Fam$-subdifferential} of $f$ at $\bar x$; again, the set
$$
    \Fsubdloc f(\bar x)=\bigcap_{\epsilon>0}
    \Fsubdeps{\epsilon} f(\bar x),
$$
where, for a given $\epsilon\ge 0$, it is
\begin{eqnarray*}
   \Fsubdeps{\epsilon} f(\bar x) &=& \{\phi\in\Fam:\ \exists r>0:\ f(x)-f(\bar x)
    \ge\phi(x)-\phi(\bar x)-\epsilon d(x,\bar x),  \\
    & &\quad\forall x\in\ball{\bar x}{r}\},
\end{eqnarray*}
is called the {\it local $\Fam$-subdifferential} of $f$ at $\bar x$.
\end{definition}

\begin{remark}
Take into account that the inequalities defining the $\Fam$-subdifferential
and its local counterpart involve an abuse of notation, because
$\phi$ should indicate a class and not a single function. Nevertheless,
since $\phi_1\sim\phi_2$ implies $\phi_1(x)-\phi_1(\bar x)=\phi_2(x)-\phi_2(\bar x)$
for every $x\in X$, such inequalities are well defined.
\end{remark}

From Definition \ref{def:Famsubds} it is readily seen that
$$
    \Fsubd f(\bar x)\subseteq \Fsubdeps{0} f(\bar x)\subseteq
   \Fsubdloc f(\bar x)\subseteq \Fsubdeps{\epsilon} f(\bar x),
   \quad\forall\epsilon>0.
$$
Notice that, if equipped with the metric $d_{\rm Lip}$, $\Fam$
becomes a metric space. In the next proposition the topological
notion of interior refers to such a metric structure on $\Phi$.
The null element of ${\rm Lip}(X)$ and its $\sim$-equivalent
class is denoted here by $\nullv$.

\begin{proposition}     \label{pro:subdnecconC}  
Given a function $f:X\longrightarrow\R\cup\{\pm\infty\}$ and $\bar x
\in\dom f$, let $\Fam\subseteq {\rm Lip}(X)/_\sim$ be such that
$\nullv\in\Fam$. If condition $(\mathcal{C})$ holds then
$$
   \nullv\in\inte \Fsubdeps{0} f(\bar x).
$$
In particular, for every $\sigma\in (0,\grsl{f}(\bar x))$, one has
$\ball{\nullv}{\sigma}\subseteq\Fsubdeps{0} f(\bar x)$.
\end{proposition}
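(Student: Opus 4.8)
The plan is to prove the stronger, quantitative ``In particular'' clause directly, since the inclusion of a ball of positive radius around $\nullv$ into $\Fsubdeps{0} f(\bar x)$ is exactly what is needed for the interior membership. So I fix an arbitrary $\sigma\in(0,\grsl{f}(\bar x))$ and aim to show that every $\phi\in\ball{\nullv}{\sigma}$ belongs to $\Fsubdeps{0} f(\bar x)$.

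First I would extract from condition $(\mathcal{C})$ a neighbourhood on which $f$ enjoys a sharp-type lower bound. Rewriting the $\liminf$ of Definition~\ref{def:grsl} as $\sup_{\delta>0}\inf_{x\in\ball{\bar x}{\delta}\backslash\{\bar x\}}(f(x)-f(\bar x))/d(x,\bar x)$ (exactly the manipulation used in the proof of Proposition~\ref{pro:Ccondshamin}), the strict inequality $\grsl{f}(\bar x)>\sigma$ furnishes a radius $r>0$ with
$$
\inf_{x\in\ball{\bar x}{r}\backslash\{\bar x\}}\frac{f(x)-f(\bar x)}{d(x,\bar x)}\ge\sigma,
$$
equivalently $f(x)-f(\bar x)\ge\sigma\, d(x,\bar x)$ for every $x\in\ball{\bar x}{r}$ (the case $x=\bar x$ being trivial, as both sides vanish).

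Next I would convert the membership $\phi\in\ball{\nullv}{\sigma}$ into a pointwise estimate on $\phi$. Since $d_{\rm Lip}(\phi,\nullv)=\|\phi\|_{\rm Lip}\le\sigma$, and this quantity bounds every difference quotient of $\phi$, one gets
$$
\phi(x)-\phi(\bar x)\le|\phi(x)-\phi(\bar x)|\le\|\phi\|_{\rm Lip}\, d(x,\bar x)\le\sigma\, d(x,\bar x),\qquad\forall x\in X,
$$
where the difference $\phi(x)-\phi(\bar x)$ is well defined on the equivalence class, as remarked after Definition~\ref{def:Famsubds}. Chaining the two displays yields, for every $x\in\ball{\bar x}{r}$,
$$
f(x)-f(\bar x)\ge\sigma\, d(x,\bar x)\ge\phi(x)-\phi(\bar x),
$$
which is precisely the inequality defining membership in $\Fsubdeps{0} f(\bar x)$ (with this same $r$). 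Hence $\ball{\nullv}{\sigma}\subseteq\Fsubdeps{0} f(\bar x)$, proving the ``In particular'' clause.

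Finally, condition $(\mathcal{C})$ supplies $\grsl{f}(\bar x)>0$, so at least one admissible $\sigma>0$ exists; the inclusion just established then places a ball of positive radius centred at $\nullv$ inside $\Fsubdeps{0} f(\bar x)$, which is the claimed $\nullv\in\inte\Fsubdeps{0} f(\bar x)$. I do not expect a genuine obstacle here, as the argument is merely a one-sided matching of the quasinorm bound on $\phi$ against the positive-slope lower bound on $f$. The one point deserving care is that a single radius $r$ serves \emph{uniformly} for all $\phi$ in the ball; this is automatic, since $r$ is chosen from $f$ alone while the estimate $\phi(x)-\phi(\bar x)\le\sigma\, d(x,\bar x)$ holds globally on $X$ for every admissible $\phi$.
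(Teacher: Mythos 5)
Your argument is correct and follows essentially the same route as the paper's proof: extract from $(\mathcal{C})$ a radius $r$ with $f(x)-f(\bar x)\ge\sigma\,d(x,\bar x)$ on $\ball{\bar x}{r}$, bound $\phi(x)-\phi(\bar x)\le\sigma\,d(x,\bar x)$ via the quasinorm, and chain the two to place $\ball{\nullv}{\sigma}$ inside $\Fsubdeps{0} f(\bar x)$. Your added remark that the radius $r$ is uniform in $\phi$ is a sound observation, though not strictly needed since membership in $\Fsubdeps{0} f(\bar x)$ only requires some $r$ for each $\phi$.
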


\begin{proof}
According to condition $(\mathcal{C})$, fixing an arbitrary
$\sigma\in (0,\grsl{f}(\bar x))$, there exists $r>0$ such that
\begin{equation}
    {f(x)-f(\bar x)\over d(x,\bar x)}\ge\sigma,\quad\forall
    x\in\ball{\bar x}{r}.      \label{in:propCeffect}
\end{equation}
So, take an arbitrary $\phi\in\ball{\nullv}{\sigma}$. Recalling the
definition of  $d_{\rm Lip}$, one has in particular
$$
    {|\phi(x)-\phi(\bar x)|\over d(x,\bar x)}\le\sigma,\quad\forall
    X\backslash\{\bar x\},
$$
whence, owing to inequality $(\ref{in:propCeffect})$, it is
$$
     f(x)-f(\bar x)\ge\phi(x)-\phi(\bar x),\quad\forall x\in
    \ball{\bar x}{r}.
$$
The last inequality shows that $\phi\in \Fsubdeps{0} f(\bar x)$
and such an inclusion implies that $\ball{\nullv}{\sigma}\subseteq
\Fsubdeps{0} f(\bar x)$.
\hfill$\square$
\end{proof}

The reader should observe that the above necessary condition holds
upon a  rather general assumption on the class $\Fam$. It is not
difficult to realize, through proper choices of $\Fam$, that without
additional assumptions the assertion of  Proposition \ref{pro:subdnecconC}  
can not be reversed. The next definition is aimed at introducing some
additional assumptions on $\Fam$ that seem to work in order to formulate
sufficient conditions for $(\mathcal{C})$.

\begin{definition}     \label{def:supdispro}
Let $\Fam\subseteq {\rm Lip}(X)/_\sim$ be a family containing
$\nullv$. $\Fam$ is said to satisfy the

(i) {\it supporting distance property} if there exists $\kappa\in
(0,+\infty)$ such that for every $\epsilon>0$
\begin{equation}       \label{in:supdistpro}
    \sup_{\phi\in\ball{\nullv}{\kappa\epsilon}}[\phi(x)-\phi(\bar x)]
    \ge\epsilon d(x,\bar x),\quad\forall x,\, \bar x\in X; 
\end{equation}

(ii) {\it local supporting distance property} at $\bar x$ if there
exists $\kappa\in (0,+\infty)$ such that for every $\epsilon>0$
there is $r>0$ such that
\begin{equation}      \label{in:supdistproloc}
    \sup_{\phi\in\ball{\nullv}{\kappa\epsilon}}[\phi(x)-\phi(\bar x)]
    \ge\epsilon d(x,\bar x),\quad\forall x\in\ball{\bar x}{r}.
\end{equation}
\end{definition}

Clearly, property (i) implies property (ii) in Definition \ref{def:supdispro}.
Moreover, if $\Phi$ is a class fulfilling (i) or (ii) and $\tilde\Phi$
is any other class such that $\tilde\Phi\supseteq\Phi$, then also
$\tilde\Phi$ does.
Some examples of families in ${\rm Lip}(X)/_\sim$ fulfilling the above
properties will be provided in the next section (see Remark
\ref{rem:supdisprodual}).

\begin{proposition}     \label{pro:subdsuf1conC}  
Given a function $f:X\longrightarrow\R\cup\{\pm\infty\}$ and $\bar x
\in\dom f$, let $\Fam\subseteq {\rm Lip}(X)/_\sim$. If $\Fam$ satisfies
the supporting distance property, then $\nullv\in\inte \Fsubd  f(\bar x)$
implies condition $(\mathcal{C})$.
\end{proposition}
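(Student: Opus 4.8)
The plan is to unwind the interiority hypothesis into a concrete lower bound on the increments of $f$ and then read off the positivity of the steepest descent rate. Since $\nullv\in\inte\Fsubd f(\bar x)$ with respect to the metric $d_{\rm Lip}$ on $\Fam$, there is a radius $\rho>0$ such that the closed ball $\ball{\nullv}{\rho}\subseteq\Fsubd f(\bar x)$. By the very definition of the $\Fam$-subdifferential, each $\phi\in\ball{\nullv}{\rho}$ satisfies $f(x)-f(\bar x)\ge\phi(x)-\phi(\bar x)$ for every $x\in X$. First I would record this whole family of inequalities and pass to the supremum over the ball, which yields
\begin{equation*}
    f(x)-f(\bar x)\ge\sup_{\phi\in\ball{\nullv}{\rho}}[\phi(x)-\phi(\bar x)],\quad\forall x\in X.
\end{equation*}

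Next I would bring in the supporting distance property. Let $\kappa\in(0,+\infty)$ be the constant it provides, and choose $\epsilon=\rho/\kappa$, so that $\kappa\epsilon=\rho$ and the ball appearing in $(\ref{in:supdistpro})$ coincides with $\ball{\nullv}{\rho}$. Inequality $(\ref{in:supdistpro})$ then gives, for all $x\in X$,
\begin{equation*}
    \sup_{\phi\in\ball{\nullv}{\rho}}[\phi(x)-\phi(\bar x)]\ge\frac{\rho}{\kappa}d(x,\bar x),
\end{equation*}
and combining this with the previous display produces the global sharp-type estimate $f(x)-f(\bar x)\ge(\rho/\kappa)d(x,\bar x)$ valid for every $x\in X$.

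Finally, dividing by $d(x,\bar x)$ for $x\ne\bar x$ and taking the $\liminf$ as $x\to\bar x$ yields $\grsl{f}(\bar x)\ge\rho/\kappa>0$, which is exactly condition $(\mathcal{C})$. The argument is essentially a duality pairing between the interior ball of subgradients and the distance function, so the only point requiring care is the bookkeeping of the constants: one must align the radius $\rho$ coming from the interiority assumption with the scaling $\kappa\epsilon$ built into the supporting distance property, i.e. set $\epsilon=\rho/\kappa$ rather than leave $\epsilon$ free. I expect no genuine obstacle beyond this matching; the global (rather than merely local) form of $(\ref{in:supdistpro})$ is precisely what makes the resulting bound hold on all of $X$, so that the $\liminf$ step is immediate. (Had we only the local version $(\ref{in:supdistproloc})$ at our disposal, the same computation would still deliver $(\mathcal{C})$, since the estimate would hold on some $\ball{\bar x}{r}$, which already suffices for the $\liminf$.)
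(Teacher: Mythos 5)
Your proof is correct and follows essentially the same route as the paper's: contain a ball $\ball{\nullv}{\rho}$ in $\Fsubd f(\bar x)$, take the supremum of $\phi(x)-\phi(\bar x)$ over that ball, and invoke the supporting distance property with $\epsilon=\rho/\kappa$ to get the global estimate $f(x)-f(\bar x)\ge(\rho/\kappa)\,d(x,\bar x)$, whence $\grsl{f}(\bar x)\ge\rho/\kappa>0$. The constant bookkeeping you flag is exactly the step the paper performs implicitly, and your closing remark that the local form $(\ref{in:supdistproloc})$ would still suffice (when paired with the global subdifferential) is also accurate.
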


\begin{proof}
By hypothesis there exists $\epsilon>0$ such that $\ball{\nullv}
{\epsilon}\subseteq\Fsubd  f(\bar x)$. By virtue of the supporting
distance property, there exists $\kappa\in (0,+\infty)$ such that
inequality $(\ref{in:supdistpro})$ holds true. Consequently,
one finds
\begin{eqnarray*}
   f(x)-f(\bar x) &\ge & \sup_{\phi\in\Fsubd  f(\bar x)}[\phi(x)-\phi(\bar x)]
    \ge\sup_{\phi\in\ball{\nullv}{\epsilon}}[\phi(x)-\phi(\bar x)] \\
      & \ge & {\epsilon\over\kappa}d(x,\bar x),\quad\forall x\in\ X,
\end{eqnarray*}
whence
\begin{equation}     \label{in:gloepskappa}
    {f(x)-f(\bar x)\over d(x,\bar x)}\ge {\epsilon\over\kappa},
    \quad\forall  x\in X\backslash\{\bar x\}.
\end{equation}
From the last inequality one immediately gets the validity of
condition $(\mathcal{C})$.
\hfill$\square$
\end{proof}

Localizing the notion of subdifferential as well as the supporting
distance property allows one to obtain a milder sufficient condition.
Nevertheless, the price to be paid for such a refinement of the
preceding result is an extra compactness assumption to be taken,
which limits the range of application.

\begin{proposition}     \label{pro:subdsuf2conC}  
Given a function $f:X\longrightarrow\R\cup\{\pm\infty\}$ and $\bar x
\in\dom f$, let $\Fam\subseteq {\rm Lip}(X)/_\sim$. If $\Fam$ satisfies
the local supporting distance property at $\bar x$ and balls in
$\Fam$ are compact, then $\nullv\in\inte \Fsubdloc f(\bar x)$
implies condition $(\mathcal{C})$.
\end{proposition}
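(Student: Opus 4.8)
The plan is to follow the same chain of inequalities used in the proof of Proposition \ref{pro:subdsuf1conC}, but to localize everything near $\bar x$. The one genuinely new difficulty is that, in the local setting, the radius on which a given $\phi\in\Fsubdloc f(\bar x)$ supports $f$ depends on $\phi$; to take a supremum over a whole ball of subgradients I need a radius that works \emph{uniformly} over that ball. This is exactly where the compactness of balls in $\Fam$ is used.

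First I would extract the data. From $\nullv\in\inte\Fsubdloc f(\bar x)$ there is $\eta>0$ with $\ball{\nullv}{\eta}\subseteq\Fsubdloc f(\bar x)$, and from the local supporting distance property at $\bar x$ there is $\kappa\in(0,+\infty)$. Setting $\epsilon=\eta/\kappa$ gives $\ball{\nullv}{\kappa\epsilon}=\ball{\nullv}{\eta}\subseteq\Fsubdloc f(\bar x)$, so every $\phi$ in this ball lies in $\Fsubdeps{\delta} f(\bar x)$ for every $\delta>0$. I then fix a slack $\epsilon'\in(0,\epsilon)$, say $\epsilon'=\epsilon/2$, and split it as $\epsilon'=\epsilon''+\rho$ with $\epsilon''=\rho=\epsilon/4$.

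The heart of the argument is the uniform radius. For each $\phi\in\ball{\nullv}{\kappa\epsilon}$, using $\phi\in\Fsubdeps{\epsilon''} f(\bar x)$, I obtain $r(\phi)>0$ with $f(x)-f(\bar x)\ge\phi(x)-\phi(\bar x)-\epsilon'' d(x,\bar x)$ on $\ball{\bar x}{r(\phi)}$. The key observation is that if $\psi\in\Fam$ satisfies $d_{\rm Lip}(\psi,\phi)<\rho$, then reading the definition of $\|\cdot\|_{\rm Lip}$ at the pair $(x,\bar x)$ yields $|(\psi(x)-\psi(\bar x))-(\phi(x)-\phi(\bar x))|\le\rho\, d(x,\bar x)$, so the same support inequality holds for $\psi$ on $\ball{\bar x}{r(\phi)}$ with slack $\epsilon''+\rho=\epsilon'$. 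Hence the open balls $\{\psi\in\Fam:\ d_{\rm Lip}(\psi,\phi)<\rho\}$, indexed by $\phi\in\ball{\nullv}{\kappa\epsilon}$, cover the compact set $\ball{\nullv}{\kappa\epsilon}$; a finite subcover centred at $\phi_1,\dots,\phi_n$ produces $r^*=\min_i r(\phi_i)>0$, and every $\psi\in\ball{\nullv}{\kappa\epsilon}$ then satisfies $\psi(x)-\psi(\bar x)\le f(x)-f(\bar x)+\epsilon' d(x,\bar x)$ for all $x\in\ball{\bar x}{r^*}$. Taking the supremum over $\psi$ gives
$$
   \sup_{\psi\in\ball{\nullv}{\kappa\epsilon}}[\psi(x)-\psi(\bar x)]
   \le f(x)-f(\bar x)+\epsilon' d(x,\bar x),\qquad\forall x\in\ball{\bar x}{r^*}.
$$

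Finally I would combine this with the local supporting distance property, which furnishes $r_\epsilon>0$ such that the left-hand side above dominates $\epsilon d(x,\bar x)$ on $\ball{\bar x}{r_\epsilon}$. On $\ball{\bar x}{\min\{r^*,r_\epsilon\}}$ the two estimates give $f(x)-f(\bar x)\ge(\epsilon-\epsilon')d(x,\bar x)=(\epsilon/2)\,d(x,\bar x)$, forcing $\grsl{f}(\bar x)\ge\epsilon/2>0$, i.e. condition $(\mathcal{C})$. The main obstacle is precisely the uniform-radius step, where compactness converts the $\phi$-dependent radii into a single $r^*$; the rest is just the localization of the computation already carried out for the global version.
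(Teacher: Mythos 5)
Your proof is correct and follows essentially the same route as the paper's: use the $\epsilon$-local subdifferential with a small slack at each centre, transfer the support inequality to a $d_{\rm Lip}$-ball around that centre via the triangle inequality for $\|\cdot\|_{\rm Lip}$, invoke compactness of $\ball{\nullv}{\kappa\epsilon}$ to extract a finite subcover and a uniform radius $r^*$, and then play the resulting uniform upper bound on $\sup_{\psi}[\psi(x)-\psi(\bar x)]$ against the local supporting distance property. The only cosmetic difference is that you use a fixed covering radius $\rho=\epsilon/4$ in $\Fam$, whereas the paper ties the covering radius to the $\phi_0$-dependent radius $r_{\sigma,\phi_0}$; the resulting lower bound on $\grsl{f}(\bar x)$ is of the same order in either case.
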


\begin{proof}
Assume that $\ball{\nullv}{\sigma}\subseteq\Fsubdloc f(\bar x)$,
for some $\sigma>0$. By using the supporting distance property
at $\bar x$, one gets the existence of $\kappa>0$ as in (ii) of 
Definition \ref{def:supdispro}. Notice that one can assume that
$\kappa\ge 1$. Take an arbitrary $\phi_0\in\ball{\nullv}{\sigma}$.
Since $\phi_0$ belongs in particular to
$\Fsubdeps{{\sigma\over 4\kappa}}f(\bar x)$, then there exists
$r_{\sigma,\phi_0}>0$ such that
$$
    f(x)-f(\bar x)\ge \phi_0(x)-\phi_0(\bar x)-{\sigma\over 4\kappa}
    d(x,\bar x),\quad\forall x\in\ball{\bar x}{r_{\sigma,\phi_0}},
$$
so that
$$
   \inf_{x\in\ball{\bar x}{r_{\sigma,\phi_0}}\backslash\{\bar x\}}
  {f(x)-f(\bar x)-[\phi_0(x)-\phi_0(\bar x)]\over d(x,\bar x)}\ge
  -{\sigma\over 4\kappa}.
$$
Without loss of generality it is possible to assume that
$r_{\sigma,\phi_0}<\min\{r,\sigma/4\kappa\}$, where $r$ is
as in inequality $(\ref{in:supdistproloc})$, corresponding to
$\epsilon=\sigma/\kappa$. Now, observe that
for every $\phi\in\ball{\phi_0}{r_{\sigma,\phi_0}}$ it results in
\begin{eqnarray*}
    \inf_{x\in\ball{\bar x}{r_{\sigma,\phi_0}}\backslash\{\bar x\}} &
  \displaystyle{f(x)-f(\bar x)-[\phi(x)-\phi(\bar x)]\over d(x,\bar x)} \ge \\
   \inf_{x\in\ball{\bar x}{r_{\sigma,\phi_0}}\backslash\{\bar x\}} &
  \displaystyle {f(x)-f(\bar x)-[\phi_0(x)-\phi_0(\bar x)]\over d(x,\bar x)}- \\ 
  \sup_{x\in\ball{\bar x}{r_{\sigma,\phi_0}}\backslash\{\bar x\}} &
  \displaystyle{\phi(x)-\phi(\bar x)-[\phi_0(x)-\phi_0(\bar x)]\over d(x,\bar x)} \ge  \\
   \inf_{x\in\ball{\bar x}{r_{\sigma,\phi_0}}\backslash\{\bar x\}} &
  \displaystyle {f(x)-f(\bar x)-[\phi_0(x)-\phi_0(\bar x)]\over d(x,\bar x)}- \\ 
  \sup_{x_1,x_2\in X\atop x_1\ne x_2} &
  \displaystyle{\phi(x_1)-\phi(x_2)-[\phi_0(x_1)-\phi_0(x_2)]\over d(x_1,x_2)} \ge \\
   & \displaystyle -{\sigma\over 4\kappa}-d_{\rm Lip}(\phi,\phi_0)\ge
    -{\sigma\over 2\kappa},
\end{eqnarray*}
wherefrom it follows
$$
   \inf_{\phi\in\ball{\phi_0}{r_{\sigma,\phi_0}}}
   \inf_{x\in\ball{\bar x}{r_{\sigma,\phi_0}}\backslash\{\bar x\}}
   {f(x)-f(\bar x)-[\phi(x)-\phi(\bar x)]\over d(x,\bar x)}\ge 
   -{\sigma\over 2\kappa}.
$$ 
The family $\{\inte\ball{\phi_0}{r_{\sigma,\phi_0}}:\ \phi_0\in
\ball{\nullv}{\sigma}\}$ forms an open covering
of $\ball{\nullv}{\sigma}$, which is a compact set by
hypothesis. Therefore, there exist $N\in\N$ and $\phi_1$,
\dots ,$\phi_N\in\ball{\nullv}{\sigma}$ such that the subfamily
$\{\inte\ball{\phi_i}{r_{\sigma,\phi_i}}:\ i=1,\dots,N\}$ is still a
covering for $\ball{\nullv}{\sigma}$. Define
$$
    r_0=\min_{i=1,\dots,N}r_{\sigma,\phi_i}.
$$
Since for every $\phi\in\ball{\nullv}{\sigma}$ an
index $i_*\in\{1,\dots,N\}$ can be found such that $\phi\in
\ball{\phi_{i_*}}{r_{\sigma,\phi_{i_*}}}$, then by virtue of the
supporting distance property at $\bar x$, recalling that
$r_0<r$, one obtains
\begin{eqnarray*}
  -{\sigma\over 2\kappa} & \le &  \inf_{\phi\in\ball{\nullv}{\sigma}}
  \inf_{x\in\ball{\bar x}{r_0}\backslash\{\bar x\}}
  {f(x)-f(\bar x)-[\phi(x)-\phi(\bar x)]\over d(x,\bar x)}  \\
    & \le & \inf_{x\in\ball{\bar x}{r_0}\backslash\{\bar x\}}
   \left[{f(x)-f(\bar x)\over d(x,\bar x)}-\sup_{\phi\in\ball{\nullv}{\sigma}}
  {\phi(x)-\phi(\bar x)\over d(x,\bar x)}\right]  \\
   & \le & \inf_{x\in\ball{\bar x}{r_0}\backslash\{\bar x\}}
   {f(x)-f(\bar x)\over d(x,\bar x)}-\inf_{x\in\ball{\bar x}{r_0}\backslash\{\bar x\}}
  \sup_{\phi\in\ball{\nullv}{\sigma}}{\phi(x)-\phi(\bar x)\over d(x,\bar x)} \\
  & \le & \grsl{f}(\bar x)-{\sigma\over\kappa}.
\end{eqnarray*}
Thus, it is $ \grsl{f}(\bar x)\ge\sigma/2\kappa$. This completes
the proof.
\hfill$\square$
\end{proof}

Combining Proposition \ref{pro:subdnecconC} and Proposition
\ref{pro:subdsuf2conC} puts one in a position to derive the following
characterization of condition $(\mathcal{C})$ in subdifferential terms.

\begin{corollary}     \label{cor:subdcha1conC}  
Given a function $f:X\longrightarrow\R\cup\{\pm\infty\}$ and $\bar x
\in\dom f$, let $\Fam\subseteq {\rm Lip}(X)/_\sim$. Suppose that
$\Fam$ satisfies the local supporting distance property at $\bar x$
and balls in $\Fam$ are compact. Then condition $(\mathcal{C})$
holds iff
$$
    \nullv\in\inte \Fsubdloc f(\bar x).
$$
\end{corollary}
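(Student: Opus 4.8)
The plan is to obtain the equivalence by gluing together the necessary condition of Proposition~\ref{pro:subdnecconC} and the sufficient condition of Proposition~\ref{pro:subdsuf2conC}, both of which are already available under the standing hypotheses of the corollary. Since Proposition~\ref{pro:subdsuf2conC} supplies one implication outright and its proof has already consumed both the compactness of balls in $\Fam$ and the local supporting distance property, essentially no new analytic work is required; the only point demanding a little care is matching up the two distinct subdifferential objects that appear in the two propositions.

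For the implication $(\mathcal{C})\Rightarrow\nullv\in\inte\Fsubdloc f(\bar x)$, I would start from Proposition~\ref{pro:subdnecconC}. Its hypothesis $\nullv\in\Fam$ is in force here, because a family satisfying the local supporting distance property contains $\nullv$ by Definition~\ref{def:supdispro}. Hence that proposition yields $\nullv\in\inte\Fsubdeps{0}f(\bar x)$. The passage to the local $\Fam$-subdifferential is then immediate from the inclusion chain recorded right after Definition~\ref{def:Famsubds}, namely $\Fsubdeps{0}f(\bar x)\subseteq\Fsubdloc f(\bar x)$: taking interiors preserves this inclusion, so $\nullv\in\inte\Fsubdeps{0}f(\bar x)\subseteq\inte\Fsubdloc f(\bar x)$, which is the desired conclusion.

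For the reverse implication $\nullv\in\inte\Fsubdloc f(\bar x)\Rightarrow(\mathcal{C})$, I would simply invoke Proposition~\ref{pro:subdsuf2conC}, whose two hypotheses---that $\Fam$ satisfies the local supporting distance property at $\bar x$ and that balls in $\Fam$ are compact---are exactly the assumptions of the corollary. This direction carries all the genuine difficulty of the statement, but that difficulty has already been discharged in the covering/compactness argument of Proposition~\ref{pro:subdsuf2conC} that produces a uniform radius $r_0$. Accordingly, the only conceptual obstacle in assembling the corollary is the bookkeeping of the three subdifferential variants $\Fsubd$, $\Fsubdeps{0}$ and $\Fsubdloc$, and checking that the inclusion chain correctly aligns the $\Fsubdeps{0}$-phrasing of Proposition~\ref{pro:subdnecconC} with the $\Fsubdloc$-phrasing required here.
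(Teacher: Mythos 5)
Your proposal is correct and follows exactly the paper's own route: the necessity direction comes from Proposition~\ref{pro:subdnecconC} combined with the inclusion $\Fsubdeps{0} f(\bar x)\subseteq\Fsubdloc f(\bar x)$ (and monotonicity of interiors), while sufficiency is Proposition~\ref{pro:subdsuf2conC} applied verbatim. The remark that $\nullv\in\Fam$ is already built into Definition~\ref{def:supdispro} is a correct and welcome bit of bookkeeping.
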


\begin{proof}
The necessary part of the thesis follows from Proposition \ref{pro:subdnecconC},
after recalling that $\Fsubdeps{0} f(\bar x)\subseteq\Fsubdloc f(\bar x)$.
The sufficient one comes from Proposition \ref{pro:subdsuf2conC}.
\hfill$\square$
\end{proof}

For functions enjoiying a certain convexity property, it becomes
possible to drop out the compactness assumption on the balls and to use
the $\Phi$-subdifferential for characterizing condition $(\mathcal{C})$.

\begin{definition}
Let $\Fam\subseteq {\rm Lip}(X)/_\sim$ be a given family.
A function $f:X\longrightarrow\R\cup\{\pm\infty\}$ is said to be
{\it $\Phi$-convex} if
$$
     f(x)=\sup\{\ell(x):\ [\ell]_\sim\in\Fam,\ \ell\le f\},\quad\forall x\in X,
$$
where $\ell\le f$ means that $\ell(x)\le f(x)$ for every $x\in X$.
\end{definition}

\begin{remark}     \label{rem:locglosubdif}
Whenever $X$ is a vector space, for certain families $\Fam\subseteq
{\rm Lip}(X)/_\sim$ it happens that if the inequality $\phi(x)\le
f(x)$ holds in a neighbourhood of a point $\bar x\in X$, then it
continues being valid on the whole space $X$. This is the case,
for example, of $\Phi$ given by the linear functionals on $X$ and
the classic concept of convexity. Notice that the property
\begin{equation}    \label{in:locglopro}
   \phi(x)\le f(x),\quad\forall  x\in\ball{\bar x}{r}\qquad
   \hbox{implies}\qquad \phi(x)\le f(x),\quad\forall  x\in X,
\end{equation}
entails that, if $f$ is $\Fam$-convex, then
$$
   \Fsubd f(\bar x)=\Fsubdeps{0} f(\bar x).
$$
\end{remark}

\begin{corollary}    \label{cor:Famconchar}
Given a function $f:X\longrightarrow\R\cup\{\pm\infty\}$ and
$\bar x\in\dom f$, let $\Fam\subseteq{\rm Lip}(X)/_\sim$ satisfy
the supporting distance property and  property $(\ref{in:locglopro})$.
Suppose that $f$ is $\Fam$-convex. Then condition $(\mathcal{C})$
holds iff $\nullv\in\inte\Fsubd f(\bar x)$.
\end{corollary}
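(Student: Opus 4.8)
The plan is to obtain this characterization by combining the necessary condition of Proposition \ref{pro:subdnecconC} with the sufficient condition of Proposition \ref{pro:subdsuf1conC}, using the $\Fam$-convexity of $f$ as the bridge between the two distinct subdifferential notions that appear on the two sides. The decisive observation, already recorded in Remark \ref{rem:locglosubdif}, is that property $(\ref{in:locglopro})$ together with the $\Fam$-convexity of $f$ forces the identity $\Fsubd f(\bar x)=\Fsubdeps{0} f(\bar x)$. This identity is exactly what will let me reconcile a conclusion phrased in terms of $\Fsubdeps{0}$ with one phrased in terms of $\Fsubd$.

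For the sufficiency (the ``if'' direction) I would argue directly and without any additional manipulation. Assuming $\nullv\in\inte\Fsubd f(\bar x)$, and recalling that $\Fam$ satisfies the supporting distance property by hypothesis, Proposition \ref{pro:subdsuf1conC} applies verbatim and delivers condition $(\mathcal{C})$. No use of $\Fam$-convexity is needed here. For the necessity (the ``only if'' direction) I would begin from condition $(\mathcal{C})$ and invoke Proposition \ref{pro:subdnecconC}; its only standing requirement, namely $\nullv\in\Fam$, is automatic, since by Definition \ref{def:supdispro} the supporting distance property is only meaningful for a family containing $\nullv$. This first step yields $\nullv\in\inte\Fsubdeps{0} f(\bar x)$. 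It is at this precise point that the $\Fam$-convexity enters: substituting the equality $\Fsubd f(\bar x)=\Fsubdeps{0} f(\bar x)$ furnished by Remark \ref{rem:locglosubdif} converts the conclusion into $\nullv\in\inte\Fsubd f(\bar x)$, as required.

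I do not expect any genuine obstacle, since the compactness hypothesis on balls in $\Fam$ --- which was the limiting ingredient in the proof of Corollary \ref{cor:subdcha1conC} --- is here supplanted by the structural identity $\Fsubd f(\bar x)=\Fsubdeps{0} f(\bar x)$, so the delicate covering argument of Proposition \ref{pro:subdsuf2conC} can be dispensed with entirely. The only point deserving a moment of care is checking that the hypotheses of the two cited propositions are actually met: the supporting distance property is assumed outright, while the equality of the global and local subdifferentials follows from the assumed $\Fam$-convexity and property $(\ref{in:locglopro})$ via Remark \ref{rem:locglosubdif}. Consequently the argument reduces to citing Proposition \ref{pro:subdsuf1conC} for one implication and, for the reverse implication, Proposition \ref{pro:subdnecconC} followed by the subdifferential identity of Remark \ref{rem:locglosubdif}.
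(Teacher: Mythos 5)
Your argument is correct and is essentially the paper's own proof: the author likewise obtains the ``if'' direction from Proposition \ref{pro:subdsuf1conC}, the ``only if'' direction from Proposition \ref{pro:subdnecconC}, and bridges the two subdifferential notions via the identity $\Fsubd f(\bar x)=\Fsubdeps{0} f(\bar x)$ of Remark \ref{rem:locglosubdif}. Your write-up simply makes explicit the details (where $\Fam$-convexity enters, why $\nullv\in\Fam$ is automatic) that the paper leaves implicit.
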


\begin{proof}
In the light of Proposition \ref{pro:subdnecconC} and Proposition
\ref{pro:subdsuf1conC}, the thesis becomes an obvious consequence
of Remark \ref{rem:locglosubdif}.
\hfill$\square$
\end{proof}


\section{Consequences on stability in variational analysis}
\label{sec:4}

Throughout the present section, $(\X,\|\cdot\|)$ denotes a real
Banach space, with null vector $\nullv$. Its topological dual is
marked by $\X^*$, whose null vector is $\nullv^*$,
whereas the duality pairing $\X^*$ and $\X$ is indicated by
$\langle\cdot,\cdot\rangle$. Set $\Uball=\ball{\nullv}{1}$ and $\Usfer=
\{u\in\Uball:\ \|u\|=1\}$ and, similarly, $\Uball^*=\ball{\nullv^*}{1}$ and
$\Usfer^*=\{u\in\Uball^*:\ \|u\|=1\}$.
Given a function $f:\X\longrightarrow\R\cup\{\pm\infty\}$ and $\bar x
\in\dom f$, the Fr\'echet derivative of $f$ at $\bar x$ is denoted by
$\FrDer f(\bar x)$.

Below some situations are illustrated, in which the conditions
involving the steepest descent rate discussed in Section \ref{sec:2}
seem to be ``not natural" for smooth functions.

\begin{remark}
(i) If function $f:\X\longrightarrow\R\cup\{\pm\infty\}$ is Fr\'echet
differentiable at $\bar x\in\dom f$, then its steepest descent rate
at $\bar x$ can be represented as follows
\begin{equation}   \label{eq:grslFrdifrep}
     \grsl{f}(\bar x)=\inf_{u\in\Usfer}\langle\FrDer f(\bar x),u\rangle.
\end{equation}
Indeed, setting $o(\|x-\bar x\|)=f(x)-f(\bar x)-\langle\FrDer f(\bar x),
x-\bar x\rangle$, one finds
\begin{eqnarray*}
  \liminf_{x\to\bar x}{f(x)-f(\bar x)\over d(x,\bar x)} &=&
  \liminf_{x\to\bar x}\left[\left\langle \FrDer f(\bar x),{x-\bar x\over
   \|x-\bar x\|}\right\rangle+{o(\|x-\bar x\|)\over\|x-\bar x\|}\right] \\
  &=& \liminf_{x\to\bar x}\left\langle \FrDer f(\bar x),{x-\bar x\over
   \|x-\bar x\|}\right\rangle=\inf_{u\in\Usfer}\langle\FrDer f(\bar x),
  u\rangle.
\end{eqnarray*}
So, being $\FrDer f(\bar x)\in\X^*$, in such event it must be
$\grsl{f}(\bar x)\le 0$. This shows that condition $(\mathcal{C})$
can never be fulfilled by a function Fr\'echet differentiable at
$\bar x$.

(ii) Condition $(\ref{in:hypthm})$ can never be satisfied by a non
constant function $f\in C^1(\X)$ admitting global minimizers.
Indeed, in such case the set $\Argmin (f)\ne\X$ is closed.
Let $\bar x\in\bd\Argmin (f)$. Then there exists $(x_n)_{n\in\N}$,
with $x_n\to\bar x$ as $n\to\infty$ and $x_n\not\in\Argmin (f)$.
According to condition $(\ref{in:hypthm})$, for some $\sigma>0$
it should be 
\begin{equation}   \label{in:grslnegativ}
\grsl{f}(x_n)=\inf_{u\in\Usfer}|\langle\FrDer f(x_n),
u\rangle|<-\sigma,\quad\forall n\in\N.
\end{equation}
Since it is $\FrDer f(x_n)\to\nullv^*=\FrDer f(\bar x)$ as $n\to\infty$
by continuity of the mapping $\FrDer f:\X\longrightarrow\X^*$,
there must exist $n_\sigma\in\N$ such that
$$
    \sup_{u\in\Usfer}|\langle\FrDer f(x_n),u\rangle|=\|\FrDer f(x_n)\|
    \le{\sigma\over 2},\quad\forall n\in\N,\ n\ge n_\sigma,
$$
so one actually finds
$$
    \inf_{u\in\Usfer}\langle\FrDer f(x_n),u\rangle\ge -{\sigma\over 2},
   \quad\forall n\in\N,\ n\ge n_\sigma
$$
which contradicts evidently inequality $(\ref{in:grslnegativ})$.
\end{remark}

For a nonsmooth function $f:\X\longrightarrow\R\cup\{\pm\infty\}$
the representation $(\ref{eq:grslFrdifrep})$ is replaced by the 
side estimate
\begin{equation}     \label{in:grslnonsmoothest}
     \grsl{f}(\bar x)\le\inf_{u\in\Usfer} \HlDer f(\bar x;u),
\end{equation}
where
$$
    \HlDer f(\bar x;u)=\liminf_{v\to u\atop t\downarrow 0}
    {f(\bar x+tv)-f(\bar x)\over t}
$$
indicates the Hadamard lower derivative of $f$ at $\bar x$, in
the direction $u\in\X$ (see, for instance, \cite{Demy05b}).
While estimate $(\ref{in:grslnonsmoothest})$ generally
is not useful  to get conditions which are sufficient
for $({\mathcal C})$, it enables one to formulate the following
sufficient condition for the solution existence of nondifferentiable
optimization problems.

\begin{theorem}
Let $f:\X\longrightarrow\R\cup\{+\infty\}$ be a l.s.c. function
bounded from below. If there exists $\sigma>0$ such that
$$
    \sup_{x\in\levsupp{f}{\inf_{X}f}\cap\dom f}
    \inf_{u\in\Usfer} \HlDer f(x;u)<-\sigma,
$$
then $\Argmin(f)\ne\varnothing$.
\end{theorem}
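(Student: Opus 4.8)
The plan is to deduce this statement directly from Proposition \ref{pro:existsufcon}, using the one-sided estimate $(\ref{in:grslnonsmoothest})$ to transfer the hypothesis from the Hadamard lower derivative back to the steepest descent rate. Since $(\X,\|\cdot\|)$ is in particular a complete metric space, with $d(x,y)=\|x-y\|$, and $f$ is l.s.c. with $\inf_{\X}f>-\infty$ thanks to the boundedness-from-below assumption, the structural hypotheses of Proposition \ref{pro:existsufcon} are automatically satisfied; only its nondegeneracy condition $(\ref{in:hypthm})$ remains to be verified.

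The key step is the following chain of inequalities. Let $\sigma>0$ be the constant provided by the hypothesis. By the estimate $(\ref{in:grslnonsmoothest})$, for every $x\in\dom f$ one has $\grsl{f}(x)\le\inf_{u\in\Usfer}\HlDer f(x;u)$. Passing to the supremum of both sides over the set $\levsupp{f}{\inf_{\X}f}\cap\dom f$ yields
$$
    \sup_{x\in\levsupp{f}{\inf_{\X}f}\cap\dom f}\grsl{f}(x)
    \le\sup_{x\in\levsupp{f}{\inf_{\X}f}\cap\dom f}\inf_{u\in\Usfer}\HlDer f(x;u)<-\sigma,
$$
where the last strict inequality is precisely the assumption of the theorem. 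Hence condition $(\ref{in:hypthm})$ holds for $f$ with the very same $\sigma$, and an application of Proposition \ref{pro:existsufcon} (with $X=\X$) gives $\Argmin(f)\ne\varnothing$.

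There is essentially no genuine obstacle once this reduction is set up: all of the analytic content has already been absorbed, on the one hand, into the estimate $(\ref{in:grslnonsmoothest})$, which bounds the steepest descent rate by the directional Hadamard lower derivatives, and, on the other hand, into the Ekeland-variational-principle argument underlying Proposition \ref{pro:existsufcon}. The only point deserving a moment's care is that $(\ref{in:grslnonsmoothest})$ must be applied pointwise and then pushed through the supremum in the correct direction; since the supremum is monotone, the strict bound $<-\sigma$ on the larger quantity descends to the smaller one, so that $(\ref{in:hypthm})$ is preserved intact.
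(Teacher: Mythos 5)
Your proposal is correct and is exactly the argument the paper intends: the paper's own proof simply cites Proposition \ref{pro:existsufcon} together with the estimate $(\ref{in:grslnonsmoothest})$, and your write-up fills in the (routine) pointwise-to-supremum step in the right direction. Nothing further is needed.
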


\begin{proof}
The thesis is an obvious consequence of Proposition \ref{pro:existsufcon}
and of the estimate $(\ref{in:grslnonsmoothest})$.
\hfill$\square$
\end{proof}

Nevertheless, some special cases are known in which
inequality $(\ref{in:grslnonsmoothest})$ turns out to hold as an
equality.
For example, if $\X=\R^n$, by virtue of the compactness of balls,
it has been shown that 
$$
    \grsl{f}(\bar x)=\min_{u\in\Usfer}\HlDer f(\bar x;u)
$$
(see \cite{Demy05b,PapUde99}). In this case, condition $({\mathcal C})$
can be characterized via the positivity of $\HlDer f(\bar x;u)$ over
$\Usfer$.

On the other hand,
when working with nonsmooth functions defined on Banach spaces
widely tools of analysis are generalized derivative constructions
based on the dual space. In this concern, observe that if
$\mathfrak{A}(\X)\subseteq{\rm Lip}(\X)$ denotes the family consisting of all
affine functions on $X$, then $\mathfrak{A}(\X)/_\sim$ can be identified
with $\X^*$. Note that in such case $\|\cdot\|_{\rm Lip}$ becomes
the usual (uniform) norm in $\X^*$. By this choice of $\Fam$,
$\Fsubd f(\bar x)$ coincides with the subdifferential of $f$ at
$\bar x$ in the sense of convex analysis, here denoted simply
by $\partial f(\bar x)$, whereas $\Fsubdloc f(\bar x)$ coincides
with the regular (aka Fr\'echet) subdifferential of $f$ at $\bar x$,
here denoted by $\regsubd f(\bar x)$, i.e.
$$
     \regsubd f(\bar x)=\left\{x^*\in\X^*:\ \liminf_{x\to\bar x}
     {f(x)-f(\bar x)-\langle x^*,x-\bar x\rangle\over \|x-\bar x\|}
     \ge 0\right\}
$$
(for detailed expositions concerning this construction
see \cite{Mord06,Peno13,RocWet98,Schi07}).
Furthermore, whenever a l.s.c. function $f:\X\longrightarrow\R
\cup\{+\infty\}$ is $\mathfrak{A}(\X)/_\sim$-convex, then it is convex
in the classical sense.

\begin{remark}     \label{rem:supdisprodual}
By standard separation arguments of convex analysis, it is not
difficult to see that $\X^*$ satisfies the supporting distance
property. Indeed, by taking $\kappa=1$, one actually has
$$
   \sup_{x^*\in\epsilon\Uball^*}\langle x^*,x-\bar x\rangle=
   \epsilon\|x-\bar x\|,\quad\forall x,\bar x\in\X.
$$
Along with $\mathfrak{A}(\X)$, other subclasses of ${\rm Lip}(\X)$
satisfying the supporting distance property and leading to
interesting $\Phi$-subdifferential constructions are
$$
   \mathfrak{S}(\X)=\{\phi:\X\longrightarrow\R:\ \phi
   \hbox{ is sublinear and continuous on $\X$}\}
$$
$$
   \mathfrak{DS}(\X)=\mathfrak{S}(\X)-\mathfrak{S}(\X).
$$
\end{remark}

By specializing to a Banach space setting what established in
Section \ref{sec:3}, it is possible to extend and generalize the characterization
of sharp minimality presented in \cite{Poly87} (Ch. 5, Lemma 3).

\begin{theorem}    \label{thm:bsconvchar}
Let $f:\X\longrightarrow\R\cup\{+\infty\}$ be a l.s.c. convex function.
Suppose that $f$ is continuous at $\bar x\in\dom f$. Then condition
$(\mathcal{C})$ holds iff $\nullv^*\in\inte\partial f(\bar x)$.
\end{theorem}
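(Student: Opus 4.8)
The plan is to read off Theorem~\ref{thm:bsconvchar} as the Banach-space instance of Corollary~\ref{cor:Famconchar}, applied to the family $\Fam=\mathfrak{A}(\X)/_\sim$ of (classes of) affine functionals, identified with $\X^*$. Under this identification one has $\Fsubd f(\bar x)=\partial f(\bar x)$, the null element $\nullv$ of $\Fam$ corresponds to $\nullv^*$, and the quasinorm on $\Fam$ becomes the dual norm, so that $\ball{\nullv}{\sigma}=\sigma\Uball^*$. Hence the abstract conclusion $\nullv\in\inte\Fsubd f(\bar x)$ is literally the assertion $\nullv^*\in\inte\partial f(\bar x)$, and it suffices to verify the three hypotheses of Corollary~\ref{cor:Famconchar} for this choice of $\Fam$ and this $f$.

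First, $\Fam=\X^*$ enjoys the supporting distance property: as recorded in Remark~\ref{rem:supdisprodual}, the Hahn--Banach theorem gives $\sup_{x^*\in\epsilon\Uball^*}\langle x^*,x-\bar x\rangle=\epsilon\|x-\bar x\|$ for all $x,\bar x\in\X$, so $(\ref{in:supdistpro})$ holds with $\kappa=1$. Second, $\Fam$ satisfies property $(\ref{in:locglopro})$ relative to convex functions, which is the point of Remark~\ref{rem:locglosubdif}: if an affine $\phi$ satisfies $f(x)-f(\bar x)\ge\phi(x)-\phi(\bar x)$ on a ball $\ball{\bar x}{r}$, then the convex function $x\mapsto f(x)-[\phi(x)-\phi(\bar x)+f(\bar x)]$ attains a local minimum, hence a global one, at $\bar x$, so the inequality propagates to all of $\X$; consequently $\Fsubd f(\bar x)=\Fsubdeps{0} f(\bar x)$. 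Third, $f$ is $\Fam$-convex: being proper (it is finite, indeed continuous, at $\bar x$), l.s.c.\ and convex, it equals the supremum of its continuous affine minorants by the Fenchel--Moreau theorem, which is exactly $\Fam$-convexity for $\Fam=\mathfrak{A}(\X)/_\sim$. With these three facts, Corollary~\ref{cor:Famconchar} yields that $(\mathcal{C})$ holds iff $\nullv\in\inte\Fsubd f(\bar x)$, and the identification above rewrites this as $\nullv^*\in\inte\partial f(\bar x)$.

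Should one prefer to bypass Corollary~\ref{cor:Famconchar}, the two implications can be produced directly from the propositions it rests on. The sufficiency $\nullv^*\in\inte\partial f(\bar x)\Rightarrow(\mathcal{C})$ is Proposition~\ref{pro:subdsuf1conC} together with the supporting distance property: taking a dual ball $\sigma\Uball^*\subseteq\partial f(\bar x)$ and the supremum of $\langle x^*,x-\bar x\rangle$ over it reproduces the lower bound $\sigma\|x-\bar x\|$ for $f(x)-f(\bar x)$. The necessity follows from Proposition~\ref{pro:subdnecconC}, which places $\ball{\nullv^*}{\sigma}$ inside $\Fsubdeps{0} f(\bar x)$ for every $\sigma\in(0,\grsl{f}(\bar x))$; the local-to-global upgrade of the subgradient inequality for convex $f$ then gives $\ball{\nullv^*}{\sigma}\subseteq\partial f(\bar x)$.

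I expect the only genuinely delicate step to be the $\Fam$-convexity of $f$, that is, the passage from the abstract notion of $\Fam$-convexity to ordinary convexity plus lower semicontinuity. This is where the separation/Fenchel--Moreau machinery is really used, and where continuity at $\bar x$ is convenient, since it secures properness and guarantees $\partial f(\bar x)\neq\varnothing$. By contrast, the supporting distance property and the propagation of the subgradient inequality from a neighbourhood to the whole space are soft consequences of convexity, and the bookkeeping identifying $\ball{\nullv}{\sigma}$ with $\sigma\Uball^*$ is routine.
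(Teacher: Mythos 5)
Your proposal is correct and follows exactly the paper's route: the paper's proof is a one-line appeal to Corollary~\ref{cor:Famconchar} with $\Fam=\mathfrak{A}(\X)/_\sim\cong\X^*$, using Remark~\ref{rem:supdisprodual} for the supporting distance property. You merely make explicit what the paper leaves implicit (Fenchel--Moreau for $\Fam$-convexity, and the anchored local-to-global subgradient inequality, which you state more carefully than Remark~\ref{rem:locglosubdif} does), so no substantive difference remains.
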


\begin{proof}
On the base of the current subdifferential contructions, by virtue of
what noticed in Remark \ref{rem:supdisprodual}, it suffices to apply
Corollary \ref{cor:Famconchar}.
\hfill$\square$
\end{proof}

Since a convex function is G\^ateaux differentiable at a given
point $\bar x$ of its domain iff its subdifferential reduces to
a singleton, condition $(\mathcal{C})$ evidently appear to be inconsistent
with such kind of smoothness at $\bar x$. Below, some of the benefits
concerning the stability behaviour of nonsmooth functions
are listed.

\begin{theorem}     \label{thm:convbenefit}
Let $f:\X\longrightarrow\R\cup\{+\infty\}$ be a l.s.c. convex
function, which is continuous at $\bar x\in\dom f$. Suppose
that $f(\bar x)=0$. If $\nullv^*\in\inte\partial f(\bar x)$ the following assertions are true:

\noindent (i) $\bar x$ is a global sharp minimizer of $f$;

\noindent (ii) $\bar x$ is superstable;

\noindent (iii) function $f$ admits a global error bound at $\bar x$;

\noindent (iv) there exists $\epsilon>0$ such that every function
$\tilde f\in\Ptb{f}{\bar x}{\epsilon}$ admits a local error bound
at $\bar x$.
\end{theorem}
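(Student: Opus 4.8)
The plan is to derive all four assertions from the single hypothesis $\nullv^*\in\inte\partial f(\bar x)$, exploiting the fact that in the present convex setting this interiority condition yields not merely the local estimate behind condition $(\mathcal{C})$ but a genuinely global one. First I would fix $\epsilon>0$ with $\ball{\nullv^*}{\epsilon}\subseteq\partial f(\bar x)$. By the definition of the subdifferential in the sense of convex analysis, every $x^*\in\ball{\nullv^*}{\epsilon}$ satisfies $f(x)-f(\bar x)\ge\langle x^*,x-\bar x\rangle$ for all $x\in\X$; taking the supremum over such $x^*$ and invoking the supporting distance property of $\X^*$ recorded in Remark \ref{rem:supdisprodual} (with $\kappa=1$) gives $f(x)-f(\bar x)\ge\epsilon\|x-\bar x\|$ for every $x\in\X$. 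Since $f(\bar x)=0$, this is exactly the global sharp minimality inequality, which proves assertion (i); in particular condition $(\mathcal{C})$ holds, in agreement with Theorem \ref{thm:bsconvchar}.

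Assertion (ii) is then immediate, because local sharp minimality and superstability are equivalent by Proposition \ref{pro:shamincharms}, and a global sharp minimizer is a fortiori a local one. For assertion (iii) I would run the argument of Proposition \ref{pro:erboCcon} globally rather than locally: the inequality $f(x)\ge\epsilon\|x-\bar x\|>0$ for all $x\ne\bar x$ forces $\lev{f}{0}=\{\bar x\}$, whence $\dist{x}{\lev{f}{0}}=\|x-\bar x\|$ for every $x\in\X$. Combining these two facts yields $\dist{x}{\lev{f}{0}}\le\epsilon^{-1}[f(x)]_+$ on all of $\X$, which is precisely a global error bound.

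Finally, assertion (iv) is the perturbation-stability statement of Corollary \ref{cor:erbosta} together with Remark \ref{rem:erbosta}: since condition $(\mathcal{C})$ holds for $f$ at $\bar x$, every $\tilde f\in\Ptb{f}{\bar x}{\epsilon'}$ with $\epsilon'<\grsl{f}(\bar x)$ still admits a local error bound there, so it suffices to pick such an $\epsilon'$.

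The only point requiring genuine care --- and the reason the conclusions emerge in their global rather than merely local forms --- is the passage from the interiority of $\nullv^*$ in $\partial f(\bar x)$ to the global affine-minorant estimate. This step rests entirely on convexity: it is the convex subdifferential, as opposed to its Fr\'echet/local analogue, that encodes inequalities valid on the whole space, allowing the supporting distance property to be applied globally. Once this estimate is secured, each assertion reduces to an already established result.
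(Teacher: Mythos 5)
Your proposal is correct and follows essentially the same route as the paper: the global affine-minorant estimate you derive from $\ball{\nullv^*}{\epsilon}\subseteq\partial f(\bar x)$ via the supporting distance property is exactly the paper's inequality $(\ref{in:gloepskappa})$, from which the paper likewise extracts global sharpness, superstability via Proposition \ref{pro:shamincharms}, the global error bound via $\lev{f}{0}=\{\bar x\}$ and $f=[f]_+$, and assertion (iv) via Remark \ref{rem:erbosta}. The only difference is that you write out the computation explicitly rather than citing the displayed inequality from Proposition \ref{pro:subdsuf1conC}.
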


\begin{proof}
In the light of the characterization provided by Theorem \ref{thm:bsconvchar},
assertions (i), (ii) and (iii) follow from the positivity of the steepest
descent rate of $f$ at $\bar x$ (remember Proposition \ref{pro:Ccondshamin},
Proposition \ref{pro:shamincharms}, and Proposition \ref{pro:erboCcon}). 
Indeed, under the current assumptions, the sharp minimality of
$\bar x$ can be obtained in a global form on account of inequality
$(\ref{in:gloepskappa})$. Besides, globality of the error bound
property trivially takes place because one has
$$
    \lev{f}{0}=\{\bar x\}\quad\hbox{ and }\quad
    f(x)=[f(x)]_+,\quad\forall x\in X.
$$
As for assertion (iv), it suffices to recall Remark \ref{rem:erbosta}.
\hfill$\square$
\end{proof}

\begin{remark}
(i) Notice that assertion (iii) in Theorem \ref{thm:bsconvchar}
allows one to recover the implication ``\ $\nullv^*\in\inte\partial
f(\bar x)\quad\Rightarrow\quad \hbox{globall error bound at
$\bar x$}$\ " appearing, among other results, in Theorem 1 of
\cite{KrVaTh10}. In turn, such implication enables one
to derive the following well-known error bound condition
for a convex inequality
$$
    f'(\bar x;v)\ge\sigma\|v\|,\quad\forall v\in\X,
$$
where $f'(\bar x;v)$ denotes the directional derivative of $f$
at $\bar x$, in the direction $v$. Its sufficiency indeed comes
directly from the dual representation
$$
    f'(\bar x;v)=\max_{x^*\in\partial f(\bar x)}\langle x^*,v
    \rangle,\quad\forall v\in\X.
$$
The necessariness instead follows from the local sharp minimality
of $\bar x$, which makes the inequality
$$
    {f(\bar x+tv)-f(\bar x)\over t}\ge\sigma\|v\|
$$
true for $t>0$ small enough.

(ii) It is clear that condition $\nullv^*\in\inte\partial f(\bar x)$ is far
from being necessary for the error bound of $f$ at $\bar x$. Indeed, the
latter property can be achieved under the condition $\nullv^*
\not\in\partial f(\bar x)$ as well (see again Theorem 1 in
\cite{KrVaTh10}).
\end{remark}

In the finite-dimensional case it is possible to establish the
following counterpart of Theorem \ref{thm:bsconvchar},
which applies to nonconvex functions.

\begin{theorem}     \label{thm:bsregsubdchar}
Let $f:\R^n\longrightarrow\R\cup\{\pm\infty\}$ be a function
and $\bar x\in\dom f$. Then condition $(\mathcal{C})$ holds iff
$\nullv^*\in\inte\regsubd f(\bar x)$.
\end{theorem}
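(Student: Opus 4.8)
The plan is to obtain the statement as the finite-dimensional specialization of Corollary \ref{cor:subdcha1conC}, applied to the family $\Fam=\mathfrak{A}(\R^n)/_\sim$ of (equivalence classes of) affine functions, which is identified with the dual space $(\R^n)^*$. Under this identification the quasinorm $\|\cdot\|_{\rm Lip}$ becomes the usual dual norm and, as already recorded in the discussion preceding Remark \ref{rem:supdisprodual}, the local $\Fam$-subdifferential $\Fsubdloc f(\bar x)$ coincides exactly with the regular subdifferential $\regsubd f(\bar x)$. Thus it suffices to verify the two hypotheses of Corollary \ref{cor:subdcha1conC} for this choice of $\Fam$, whereupon condition $(\mathcal{C})$ becomes equivalent to $\nullv^*\in\inte \Fsubdloc f(\bar x)=\inte\regsubd f(\bar x)$, which is the desired claim. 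In particular, since the cited corollary already packages both implications, no separate treatment of necessity and sufficiency, nor any special handling of the values $\pm\infty$ in the range of $f$, will be required.

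First I would check the local supporting distance property at $\bar x$. By Remark \ref{rem:supdisprodual}, the dual space $(\R^n)^*$ satisfies the \emph{global} supporting distance property with $\kappa=1$, since $\sup_{x^*\in\epsilon\Uball^*}\langle x^*,x-\bar x\rangle=\epsilon\|x-\bar x\|$ for all $x,\bar x\in\R^n$. Because property (i) of Definition \ref{def:supdispro} implies property (ii), the local supporting distance property at $\bar x$ holds automatically, with the same constant $\kappa=1$ and with $r$ chosen arbitrarily.

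The one remaining hypothesis, and the crux of why the statement is confined to $\R^n$, is the compactness of balls in $\Fam$. Here I would simply invoke the Heine--Borel theorem: since $\Fam$ is, isometrically, the finite-dimensional normed space $(\R^n)^*$, every closed ball $\ball{\nullv^*}{\sigma}$ is compact in the metric $d_{\rm Lip}$. This is exactly the point that fails in infinite dimensions, where by Riesz's lemma the closed unit ball of $\X^*$ is never norm-compact, so no analogous unconditional characterization of $(\mathcal{C})$ via $\regsubd f$ can be expected there; this explains why the convex case of Theorem \ref{thm:bsconvchar} had instead to route through $\Fam$-convexity and the global $\Fam$-subdifferential. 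With both hypotheses of Corollary \ref{cor:subdcha1conC} now in force, the asserted equivalence follows at once.
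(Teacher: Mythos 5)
Your proposal is correct and follows exactly the paper's own route: the paper likewise proves this theorem by identifying $\mathfrak{A}(\R^n)/_\sim$ with $(\R^n)^*$, invoking Remark \ref{rem:supdisprodual} for the supporting distance property, and applying Corollary \ref{cor:subdcha1conC}, with compactness of balls supplied by finite-dimensionality. You have merely made explicit the verifications that the paper leaves implicit (the Heine--Borel step and the identification $\Fsubdloc f(\bar x)=\regsubd f(\bar x)$), which is fine.
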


\begin{proof}
Upon the identification $\mathfrak{A}(\X)/_\sim\cong\X^*$, all hypotheses
of Corollary \ref{cor:subdcha1conC} happen to be fulfilled
(recall Remark \ref{rem:supdisprodual}). Thus the thesis becomes
an obvious consequence of it.
\hfill$\square$
\end{proof}

The consequences on the variational behaviour stability of
nonconvex nonsmooth functions are presented below.
Notice that in the absence of convexity assertions $(i)$ and
$(iii)$ lose their global validity.

\begin{theorem}      \label{thm:nonconvbenefit}
Let $f:\R^n\longrightarrow\R\cup\{\pm\infty\}$ be a function,
with $\bar x\in\dom f$. Suppose that $f(\bar x)=0$. If $\nullv^*
\in\inte\regsubd f(\bar x)$ the following assertions are true:

\noindent (i) $\bar x$ is a local sharp minimizer of $f$;

\noindent (ii) $\bar x$ is superstable;

\noindent (iii) function $f$ admits a local error bound at $\bar x$;

\noindent (iv) there exists $\epsilon>0$ such that every function
$\tilde f\in\Ptb{f}{\bar x}{\epsilon}$ admits a local error bound.
\end{theorem}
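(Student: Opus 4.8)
The plan is to reduce every assertion to condition $(\mathcal{C})$ and then quote the corresponding result from Section \ref{sec:2}. Since $f$ is defined on $\R^n$, Theorem \ref{thm:bsregsubdchar} is applicable and converts the hypothesis $\nullv^*\in\inte\regsubd f(\bar x)$ into the statement that $\grsl{f}(\bar x)>0$, i.e. that condition $(\mathcal{C})$ holds at $\bar x$. This is the only nontrivial input; the four assertions will then follow as direct citations. The theorem is the nonconvex finite-dimensional analogue of Theorem \ref{thm:convbenefit}, and the sole structural difference is that, lacking convexity, the local-to-global property $(\ref{in:locglopro})$ is unavailable, so assertions (i) and (iii) are obtained only in their local forms.

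For assertion (i), I would invoke Proposition \ref{pro:Ccondshamin}, by which $\bar x$ is a local sharp minimizer exactly when $(\mathcal{C})$ holds. Assertion (ii) then follows from Proposition \ref{pro:shamincharms}, which identifies local sharp minimality with superstability of $\bar x$ for the family $(\mathcal{P}_g)$. For assertion (iii), the hypothesis $f(\bar x)=0$ lets me apply Proposition \ref{pro:erboCcon} verbatim, yielding a local error bound at $\bar x$ together with the estimate $\grsl{f}(\bar x)\le\Err{f}(\bar x)$.

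Assertion (iv) I would obtain from Remark \ref{rem:erbosta} (equivalently, from Corollary \ref{cor:erbosta}): since $\grsl{f}(\bar x)>0$, any choice $\epsilon<\grsl{f}(\bar x)$ ensures that each $\tilde f\in\Ptb{f}{\bar x}{\epsilon}$ still satisfies $(\mathcal{C})$ at $\bar x$ and hence admits a local error bound there. I do not expect any genuine obstacle: the substance of the result is packaged in the characterization Theorem \ref{thm:bsregsubdchar}, after which the four statements are routine consequences of the propositions of Section \ref{sec:2}. The only point needing a word of care is the passage from the global statements of Theorem \ref{thm:convbenefit} to the merely local ones here, which is forced precisely by the absence of the convexity assumption.
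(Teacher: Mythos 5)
Your proposal is correct and follows essentially the same route as the paper: the paper's proof likewise reduces everything to condition $(\mathcal{C})$ via Theorem \ref{thm:bsregsubdchar} and then cites Propositions \ref{pro:Ccondshamin}, \ref{pro:shamincharms}, \ref{pro:erboCcon} and Remark \ref{rem:erbosta}, exactly as you do. Your remark on why (i) and (iii) remain only local without convexity matches the paper's own comment preceding the theorem.
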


\begin{proof}
It is possible to argue as in the proof of Theorem \ref{thm:bsconvchar},
using instead the characterization provided by Theorem
\ref{thm:bsregsubdchar}.
\hfill$\square$
\end{proof}

\begin{remark}
According to \cite{PolRoc98} a point $\bar x\in\X$ is said to give a
tilt-stable local minimum of $f:\X\longrightarrow\R\cup\{\pm\infty\}$,
if $\bar x\in\dom f$ and there exists $\delta>0$ such that the set-valued
mapping $\mathcal{M}_{f,\delta}:\X^*\rightrightarrows\X$ defined by
$$
    \mathcal{M}_{f,\delta}(x^*)=\left\{y\in\X:\ y\hbox{ solves } 
    \min_{x\in\ball{\bar x}{\delta}} [f(x)-f(\bar x)-\langle x^*,
    x-\bar x\rangle]\right\}
$$
is single-valued and Lipschitz on some neighbourhood of $\nullv^*$,
with $\mathcal{M}_{f,\delta}(\nullv^*)=\{\bar x\}$. Since for any $x^*\in
\X^*$ and $x\in\X$, one has $\stsl{x^*}(x)=\|x^*\|$, from the superstability
of $\bar x$ for $f$ follows that, if taking $\delta<\grsl{f}(\bar x)$,
the mapping $\mathcal{M}_{f,\delta}$ in a neighbourhood
of $\nullv^*$ constantly takes the value $\{\bar x\}$. Thus, Theorem
\ref{thm:convbenefit} and Theorem \ref{thm:nonconvbenefit} entail
also the tilt-stability of $\bar x$.
\end{remark}


\section{Conclusions}
\label{sec:5}

The meditations exposed in the present paper should convince a reader
that in optimization nonsmoothness does not mean necessarily
a pathology, leading only to handicaps in the problem analysis.
In certain situations having to do with the solution stability and
the sufficiency of optimality conditions,
nonsmoothness affords a robustness behaviour that smoothness
can not guarantee. Here some evidences of such a phenomenon
are collected and discussed. An aspect which seems
to be remarkable is that a unifying study of the issue can be
conducted already in a metric space setting, via a positivity
condition on the steepest descent rate.




\end{document}